\newtheorem{theorem}{Theorem} [section]
\newtheorem{proposition}[theorem]{Proposition}
\newtheorem{lemma}[theorem]{Lemma}
\theoremstyle{definition}
\newtheorem{definition}[theorem]{Definition}
\theoremstyle{remark}
\title {Unknotting Tunnels in Hyperbolic 3-Manifolds}
\author[Adams]{Colin Adams}
\address{Colin Adams, Department of Mathematics and Statistics, Williams College, Williamstown, MA 01267}
\email{Colin.C.Adams@williams.edu}
\author[Knudson]{Karin Knudson}
\address{Karin Knudson, Department of Mathematics, University of Texas, 1 University Station C1200, Austin, TX 78712 }
\email{kknudson@math.utexas.edu}
\subjclass[2010]{57M50}
\begin{document}

\begin{abstract} An unknotting tunnel in a 3-manifold with boundary is a properly embedded arc, the complement of an open neighborhood of which is a handlebody. A geodesic with endpoints on the cusp boundary of a hyperbolic 3-manifold and perpendicular to the cusp boundary is called a vertical geodesic. Given a vertical geodesic $\alpha$ in a hyperbolic 3-manifold $M$, we  find sufficient conditions for it to be an unknotting tunnel. In particular, if $\alpha$ corresponds to a 4-bracelet, 5-bracelet or 6-bracelet in the universal cover and has short enough length, it must be an unknotting tunnel. Furthermore, we consider a vertical geodesic $\alpha$ that satisfies the elder sibling property, which means that in the universal cover, every horoball except the one centered at $\infty$ is connected to a larger horoball by a lift of $\alpha$. Such an $\alpha$ with length less than $\ln{(2)}$ is then shown to be an unknotting tunnel. 
\end{abstract}

\maketitle

\section{Introduction} 

An \textit{unknotting tunnel} $\alpha$ in a manifold $M$ with boundary is  an arc properly embedded in the manifold  such that $M-N(\alpha)$  is a handlebody.  
Given a knot or link $K$  in $S^3$, an arc $\alpha$ that intersects $K$ in its endpoints is said to be an \textit{unknotting tunnel} if it is an unknotting tunnel when restricted to the exterior of $K$.

The \textit{tunnel number} of a manifold  is the least number of properly embedded arcs  such that the complement of an open regular neighborhood of the arcs is a handlebody.  Every compact orientable manifold with boundary has a finite tunnel number associated with it, but here, we will be dealing with manifolds of tunnel number one. 

We consider finite volume orientable hyperbolic manifolds with one or two cusps. Examples include hyperbolic knot or link complements. If $M$ is such a manifold, then there exists a projection map $p:H^3 \to M$ that generates the hyperbolic structure. Cusps then lift to collections of horoballs in hyperbolic 3-space, and geodesics with both ends going out the cusps lift to collections of geodesics connecting horoballs. Such geodesics are called {\em vertical geodesics} and are candidates for being unknotting tunnels when the interiors of disjoint cups are removed.

In \cite{adams95}, it was proved that for two-cusped hyperbolic 3-manifolds, all unknotting tunnels are vertical geodesics, and further if the length of such a geodesic is defined as the length outside a maximal cusp or set of maximal cusps with disjoint interiors, then the length of a geodesic corresponding to an unknotting tunnel is less than $\ln {(4)}$. 

For one-cusped manifolds, it is not generally known that an unknotting tunnel must be isotopic to a geodesic.  However, in  \cite {AR}, it was shown to be true for hyperbolic 2-bridge knots . In \cite{CFP}, it was proved that an unknotting tunnel in a one-cusped hyperbolic manifold coming from``generic" surgery on a 2-cusped manifold is isotopic to a geodesic. Moreover, in \cite{CLP}, it was proved that unknotting tunnels in one-cusped hyperbolic 3-manifolds can be arbitrarily long, unlike the case for 2-cusped manifolds. 

Given a vertical geodesic $\alpha$ in the manifold, we  find sufficient conditions for it to be an unknotting tunnel. In particular, in the universal cover $H^3$, define an $n$-bracelet to be a cycle of  $n$ horoballs covering the cusps,  connected by lifts of $\alpha$.  We show that if $\alpha$ has length less than $\ln{(\sqrt{2})}$ and possesses a 4-bracelet, it must be an unknotting tunnel. If $\alpha$ has length less than 0.168474, and possesses a 5-bracelet, then it must be an unknotting tunnel. If $\alpha$ has length 0, meaning it corresponds to a point of tangency of the maximal cusp or cusps, it is an unknotting tunnel whenever there is a bracelet of six or fewer horoballs.

We then make use of the elder sibling property to obtain additional sufficient conditions for a geodesic in a knot complement to be an unknotting tunnel. The elder sibling property for balls in hyperbolic space was introduced by Freedman and McMullen in  \cite{FM}, where it was used to create a criterion for tameness of 3-manifolds.  A 3-manifold is tame if it is homeomorphic to the interior of a compact  manifold with boundary.  In 1974 \cite{Marden}, Marden  first raised the question of whether every complete hyperbolic three-manifold with finitely generated fundamental group is topologically tame, and this question became known as the Tameness Conjecture.  Marden proved that geometrically finite hyperbolic three manifolds are topologially tame. Freedman and McMullen developed the concept of elder sibling to make further inroads on the problem, proving with a Morse theory argument that the elder sibling property for  a hyperbolic 3-manifold with finitely generated fundamental group implies tameness.

The Tameness Conjecture was eventually proved in full generality in 2004 by Ian Agol \cite{agol}, and also by Danny Calegari and David Gabai  \cite{CG}.  A number of results follow from tameness, including the Ahlfors Cojecture, posed in the early 1960s, which states that the limit set of a finitely generated Kleinian group is either the whole sphere, or of measure zero \cite{canary08}.

Freedman and McMullen define the elder sibling property to hold for a collection of open balls in $H^3$ if there exists a ball $B_1$ in the set such that  any ball in the collection is joined to $B_1$ by a finite chain of overlapping balls moving monotonically closer to $B_1$.  

We apply a version of elder sibling to collections of horoballs and beams connecting the horoballs corresponding to a cusped hyperbolic 3-manifold and geodesic pair,  and show that if the ball-and-beam pattern is elder sibling, the geodesic must be an unknotting tunnel. We extend this to a weaker condition called almost elder sibling.

Note that we are only considering orientable manifolds throughout, so any mention of manifolds should be taken to mean orientable manifolds, even when this is not specified.

\section{Ball-and-Beam Patterns}
To prove the following results, we  make use of the \textit{ball-and-beam pattern} associated with a given manifold and vertical geodesic pair $(M,\alpha)$, as discussed in \cite{adams95}.  A \textit{vertical geodesic} is a geodesic $\alpha$ in a cusped hyperbolic three-manifold $M$ that is perpendicular to the cusp or cusps at each of its ends.  When the manifold is lifted to the upper-half-space model of $H^3$ so that one end of a lift of the geodesic touches the horoball centered at $\infty$, the lift is  a vertical  ray.   

 The \textit{ball-and-beam pattern} associated with $(M,\alpha)$ is the subset of  $H^3$ given by $p^{-1}(C \cup N(\alpha))$, where $C$ is a  cusp or the union of disjoint cusps. The beams are given by the preimage of $N(\alpha)$. We only consider a beam connected to a ball if its endpoint is at the center of the ball (which in the case of a horoball is the point of tangency of the ball with $\partial H^3$).  It is possible that a beam may intersect a ball that is not one of the two balls at which it has its endpoints, but we consider these intersections to be \textit{ghost intersections}, and they do not count as true intersections when we consider the connectedness of a ball-and-beam pattern.  One can always shrink back the cusp or cusps so that the only intersections of beams with balls occurs when the balls are at the endpoints of the beams.
 
 However, it is also convenient to be able to expand the balls. A  \textit{maximal cusp} of a manifold is a cusp that has been expanded until it first touches itself on the boundary.  A maximal cusp then lifts to a set of horoballs in $H^3$, some of them tangent to each other, with disjoint interiors. In the case of multiple cusps, a maximal cusp collection is any choice of expanded cusps such that none overlap in their interiors, either with themselves or each other, and none can be expanded while preserving this fact. We define the \textit{length} of a vertical geodesic to be the length in $H^3$ of that part of the geodesic that lies between the two points on the geodesic where it intersects the boundary of the maximal cusp or cusps(again excluding ghost intersections). 

To distinguish between horoballs in a horoball pattern, we refer to a particular horoball as $H_a$, where $a$ is the point in the x-y plane at which the horoball is tangent.  The horoball centered at $\infty$ will be denoted $H_\infty$. Most of the horoball patterns depicted in this paper correspond to maximal cusps.

\begin{figure}[ht]
\centering
\includegraphics*[viewport=0 0 500 140]{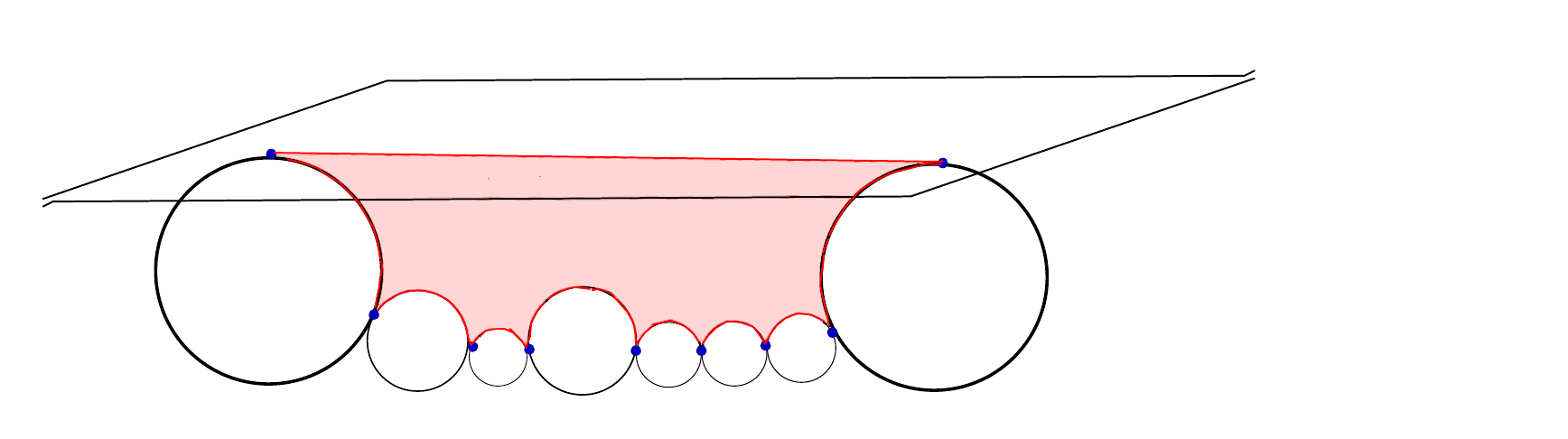}
\caption{Here, $\alpha$ is a vertical geodesic of length $0$, the lifts of which appear as dots corresponding to points of tangency of the horoballs.  The nine horoballs form a 9-bracelet.  The shaded region forms a 9-disk.}
\label{ndisk}
\end{figure}

It  will also be useful to consider $n$-bracelets and $n$-disks in ball-and-beam patterns.  An $n$-\textit{bracelet} is a sequence of $n$ horoballs cyclically connected by beams.  We assume $n \geq 3$. We say that a ball-and-beam pattern contains an $n$-\textit{disk} if there is a disk $D$ in $H^3$ that intersects the ball-and-beam pattern of $(M, \alpha)$ in $\partial{D}$ such that $\partial{D}$ is a nontrivial curve in an $n$-bracelet. See Figure \ref{ndisk} for an example of how an n-bracelet might look.  We say the $n$-bracelet is \textit{blocked} if it does not correspond to an $n$-disk, i.e. there is no nontrivial curve in the $n$-bracelet that bounds a disk with interior in the complement of the balls and beams (Figure \ref{blockeddisk}).  

\begin{figure}[ht]
\centering
\scalebox{.7}{\includegraphics*{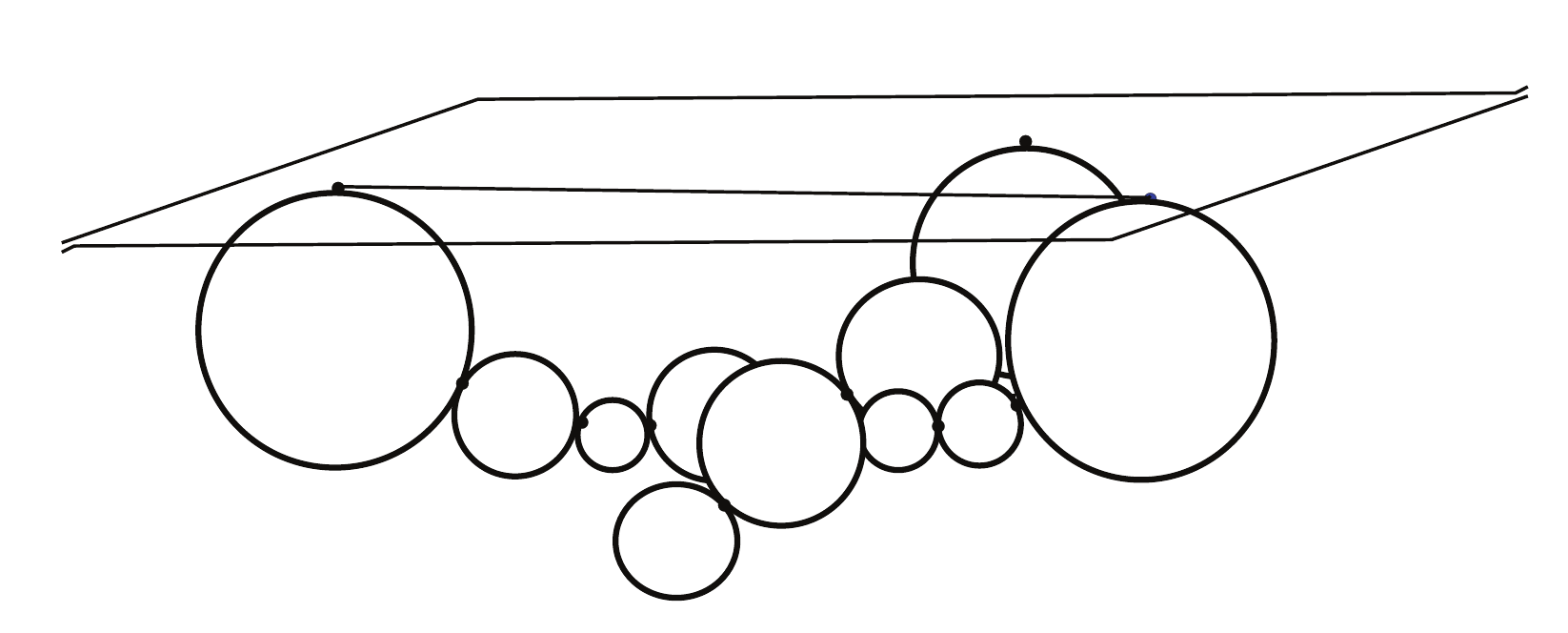}}
\caption{A blocked $n$-disk.}
\label{blockeddisk}
\end{figure}

In fact, a result from \cite{adams95}  renders these $n$-bracelets extremely useful for us:

\begin{lemma} [ \cite{adams95}, Corollary 4.2]
If M is a one-cusped hyperbolic 3-manifold and $\alpha$ a vertical geodesic within it or if M is a 2-cusped hyperbolic 3-manifold and $\alpha$ is a vertical geodesic that has ends at both cusps, and if the ball-and-beam pattern for $(M, \alpha)$ contains an n-disk, then $\alpha$ must be an unknotting tunnel for $M$.
\end{lemma}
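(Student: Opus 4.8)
The plan is to show that $X:=\overline{M}\setminus N(\alpha)$ is a handlebody, where $\overline{M}$ denotes $M$ with the interiors of the maximal cusp or cusps removed; by definition this is exactly the statement that $\alpha$ is an unknotting tunnel for $M$. Set $Y:=N(\partial\overline{M}\cup\alpha)$, a compression body with $\partial_{+}Y=\partial X=:\Sigma$ a genus two surface, so that $\overline{M}=Y\cup_{\Sigma}X$. One first checks, using that $M$ is hyperbolic and hence $\overline{M}$ irreducible, that $X$ is irreducible with connected boundary $\Sigma$; it then suffices to prove that $\pi_{1}(X)$ is free, or equivalently that $\Sigma$ can be compressed inside $X$, repeatedly, until what remains is a ball, since in either case $X$ is forced to be the genus two handlebody.

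Next I extract a compressing disk for $\Sigma$ in $X$ from the $n$-disk. The ball-and-beam pattern $\mathcal{B}=p^{-1}(\partial\overline{M}\cup N(\alpha))=p^{-1}(Y)$ is invariant under $\Gamma:=\pi_{1}(M)$, and $\widetilde{X}:=H^{3}\setminus\mathcal{B}$ is precisely $p^{-1}(X)$, with $p^{-1}(\Sigma)=\partial\widetilde{X}$ a covering surface of $\Sigma$. An $n$-disk is an embedded disk $D$ with interior in $\widetilde{X}$ whose boundary is a curve on $p^{-1}(\Sigma)$ that runs once around an $n$-bracelet and is essential there. Because the interior of $D$ lies in $\widetilde{X}=p^{-1}(X)$, the loop $\gamma:=p(\partial D)\subset\Sigma$ bounds the singular disk $p(D)$ \emph{whose interior lies in $X$}; thus $\gamma$ is null-homotopic in $X$. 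It is also not null-homotopic on $\Sigma$, since its lift $\partial D$ is essential on the covering surface $p^{-1}(\Sigma)$ and $\pi_{1}$ of a component of that surface injects into $\pi_{1}(\Sigma)$. Applying Dehn's lemma and the loop theorem to the pair $(X,\Sigma)$ itself — rather than to $\overline{M}$ — therefore yields an embedded compressing disk for $\Sigma$ in $X$, so $\Sigma$ is compressible in $X$.

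It remains to iterate this until $X$ has been cut down to a ball. Compressing $X$ along the disk just produced replaces $(\overline{M},Y,\Sigma,X)$ by $(\overline{M},Y_{1},\Sigma_{1},X_{1})$, where $Y_{1}$ is $Y$ with a two-handle attached, $\Sigma_{1}$ has strictly smaller genus, and $X$ is recovered from $X_{1}$ by reattaching a one-handle; $\Gamma$-equivariantly this amounts to enlarging $\mathcal{B}$ by the $\Gamma$-orbit of lifts of that two-handle, that is, to resolving the whole orbit of the $n$-bracelet that carried the disk. The plan is then to show that the modified pattern, having strictly smaller complexity (fewer bracelets, or a $\Sigma$ of smaller genus), again furnishes a compressing disk for $\Sigma_{1}$ in $X_{1}$ by the same mechanism, so that after finitely many steps $\Sigma$ has been reduced and $X$ is exhibited as a handlebody. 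The two-cusped case is handled identically, with $Y$ modelled on $(T^{2}\sqcup T^{2})\times I$ together with the one-handle joining the two tori.

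The step I expect to be the main obstacle is controlling this iteration. One must verify that each compression is genuinely essential — that it drops the complexity and does not, for instance, leave $\Sigma_{1}$ boundary-parallel in $\overline{M}$, or cancel against the compression-body structure of $Y_{1}$, either of which would stall the reduction before $X$ has been cut to a ball — and that the pattern inherited after resolving an orbit of bracelets keeps supplying compressing disks all the way down. This is precisely where the detailed combinatorial analysis of $n$-bracelets and $n$-disks carried out earlier in \cite{adams95} is needed.
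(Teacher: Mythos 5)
First, note that the paper does not prove this statement at all: it is imported verbatim as Corollary~4.2 of \cite{adams95}, so there is no in-paper argument to compare yours against; your proposal has to stand on its own, and it does not yet. The first half is essentially sound in outline: projecting the $n$-disk gives a singular disk in $X=\overline{M}\setminus N(\alpha)$ whose boundary is a loop on $\Sigma$ that is essential on $\Sigma$ (modulo two details you elide: the paper's $\partial D$ lies \emph{in} the bracelet, not on $p^{-1}(\Sigma)$, so it must first be pushed to the frontier of the pattern, and one must check that nontriviality in the solid bracelet implies nontriviality on the boundary surface), and the loop theorem then compresses $\Sigma$ into $X$.

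The genuine gap is the step you yourself flag: getting from ``$\Sigma$ is compressible in $X$'' to ``$X$ is a handlebody.'' This is not a matter of controlling a routine iteration, because the intermediate statement you have actually established is strictly weaker than the conclusion. Concretely, let $\alpha$ be a boundary-parallel vertical-looking arc in a one-cusped hyperbolic $\overline{M}$: then $X\cong \overline{M}$ with a $1$-handle attached, so $\Sigma$ compresses in $X$ along the cocore of that handle, yet $\pi_1(X)\cong\pi_1(M)\ast\mathbb{Z}$ is not free and $X$ is not a handlebody --- the first compression simply cancels the tunnel and the iteration stalls at an incompressible torus. So no argument that only uses ``some essential curve on $\Sigma$ dies in $X$'' can succeed; the loop-theorem disk need not retain any memory of the $n$-disk, the hypothesis furnishes exactly one $n$-disk, and nothing guarantees that the enlarged pattern after attaching the $2$-handle orbit ``keeps supplying compressing disks.'' A correct proof must use more: either the hyperbolicity of $M$ (irreducibility and atoroidality, to pin down the torus or tori produced by compressing the genus-two surface) combined with an argument excluding the handle-cancelling case above, or the finer combinatorial structure of how the $n$-disk's boundary runs over the beams (its intersections with the cocore annulus of the tunnel). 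That missing analysis is precisely the content of Corollary~4.2 in \cite{adams95}, so as written your proposal records the easy half and leaves the essential half as a plan.
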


In the following section, we use $n$-disks and ball-and-beam patterns to provide sufficient conditions for a given vertical geodesic to be an unknotting tunnel.   But first, we need a few geometric lemmas.

\begin {lemma}\label{lengthofg}
Let $H_1$ and $H_2$ be horoballs of Euclidean radii $r_1$ and $r_2$ centered at points $x_1$ and $x_2$ on the x-y plane, respectively, and let $\gamma$ be the geodesic that runs from $x_1$ to $x_2$.   Let $b$  be the Euclidean radius of the semicircular arc formed by $\gamma$, and let  the segment of $\gamma$ that runs between $H_1$ and $H_2$ have length $g$ (see Figure \ref{twohoroballs}). Then $g = \ln { (\frac{b^2}{r_1 r_2} )}$.
\end{lemma}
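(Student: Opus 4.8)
The plan is to work in the upper half-space model and reduce to a planar picture. First I would apply an isometry of $H^3$ — a horizontal translation composed with a Euclidean rotation fixing the boundary plane — so that $x_1$ and $x_2$ both lie on the real axis, say $x_1 = 0$ and $x_2 = 2b$. Such a map acts as a Euclidean isometry on every horosphere, so it preserves $r_1$, $r_2$ and the hyperbolic length $g$, and no generality is lost. Then $\gamma$ is the Euclidean semicircle of radius $b$ centered at $(b,0)$, which I parametrize by the angle $\theta \in (0,\pi)$ as $(x(\theta),t(\theta)) = (b - b\cos\theta,\ b\sin\theta)$, so that $\theta\to 0$ approaches $x_1$ and $\theta\to\pi$ approaches $x_2$.

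Next I would record two standard computations. Differentiating the parametrization gives $\sqrt{x'(\theta)^2 + t'(\theta)^2} = b$, so the hyperbolic arc-length element along $\gamma$ is $ds = d\theta/\sin\theta$, with antiderivative $\ln\lvert\tan(\theta/2)\rvert$. Then I would locate the two points where $\gamma$ leaves the horoballs. The horosphere $\partial H_1$ is the Euclidean sphere $x^2 + (t - r_1)^2 = r_1^2$; substituting the parametrization and simplifying with the half-angle identities $1 - \cos\theta = 2\sin^2(\theta/2)$ and $\sin\theta = 2\sin(\theta/2)\cos(\theta/2)$ collapses the intersection equation to $\tan(\theta_1/2) = r_1/b$. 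The symmetric computation for $\partial H_2$, the sphere $(x - 2b)^2 + (t - r_2)^2 = r_2^2$, gives $\tan(\theta_2/2) = b/r_2$.

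Finally I would assemble the answer:
\[
g \;=\; \int_{\theta_1}^{\theta_2}\frac{d\theta}{\sin\theta} \;=\; \ln\tan(\theta_2/2) - \ln\tan(\theta_1/2) \;=\; \ln(b/r_2) - \ln(r_1/b) \;=\; \ln\!\left(\frac{b^2}{r_1 r_2}\right),
\]
which is the claim.

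I do not expect a genuine obstacle; this is a computation. The only points needing a little care are: (i) confirming that the normalizing isometry really fixes both Euclidean radii and $g$ (it does, since horizontal Euclidean motions of the boundary plane extend to hyperbolic isometries acting Euclidean-isometrically on horospheres); (ii) checking orientations so that $\theta_1 < \theta_2$ and $g > 0$ precisely when the horoballs are disjoint, i.e. $r_1 r_2 < b^2$ — if they are tangent or overlapping the formula correctly returns $g \le 0$; and (iii) the degenerate configuration where $x_2 = \infty$ and $\gamma$ is a vertical line, which is not literally a semicircle but follows either by continuity or by a one-line direct computation using $ds = dt/t$.
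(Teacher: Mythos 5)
Your proposal is correct and follows essentially the same route as the paper: both parametrize the semicircular geodesic by angle, use the arc-length formula $\ln\tan(\theta/2)$ (the paper quotes it in the equivalent $\csc\theta-\cot\theta$ form, you derive it by integrating $d\theta/\sin\theta$), locate the angles where $\gamma$ meets the two horospheres, and substitute. The only difference is bookkeeping: you work with $\tan(\theta/2)=r_1/b$ and $b/r_2$ directly, while the paper computes $\cos$ and $\sin$ of the two angles and simplifies, so there is no substantive divergence to report.
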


\begin{figure}[ht]
\centering
\includegraphics*[viewport=0 0 250 150]{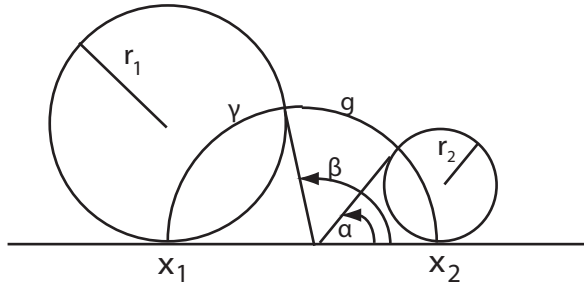}
\caption{The situation described in Lemma 2.2.}
\label{twohoroballs}
\end{figure}

\begin{proof}
The hyperbolic length of the geodesic arc connecting $H_1$ and $H_2$ is $\ln{( \frac{ csc\alpha - cot \beta}{csc \alpha - cot \alpha} )} = \ln{(\frac {(1 - cos\beta)(sin\alpha) }{(1-cos \alpha)(sin \beta) } ) }$, where $\alpha$ and $\beta$ are the angles shown in Figure \ref{twohoroballs}.  Calculating the point of intersection of $H_1$ and the semicircular arc of radius $b$, we can show that $\cos{\alpha} = \frac{ b^2- r_1^2} { b^2 + {r_1}^2 } $ and $\sin{\alpha} = \frac {2br_1}{b^2+r_1^2} $.  Similarly, $\cos{\beta} = \frac{ b^2- r_2^2} { b^2 + {r_2}^2 } $ and $\sin{\alpha} = \frac {2br_2}{b^2+r_2^2} $.  Substituting these values for $\sin{\alpha}$, $\cos{\alpha}$, $\sin{\beta}$, and $\cos{\beta}$ into the above formula for $g$ and simplifying yields that $g = \ln{(\frac{b^2}{r_1 r_2} )}$.

\end{proof}

\begin{lemma}
For two horoballs $H_1$ and $H_2$ of radius $r_1$ and $r_2$ centered at points $x_1$ and $x_2$, respectively, on the xy-plane and connected by a geodesic arc segment of length $g$, the distance $d(x_1,x_2)$ between their points of tangency with the xy-plane is equal to $2 \sqrt{r_1 r_2 e^g } $.
\end{lemma}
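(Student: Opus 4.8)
The plan is to obtain this as an essentially immediate consequence of Lemma \ref{lengthofg}. The only geometric input needed beyond that lemma is the elementary observation that, in the upper half-space model, the geodesic $\gamma$ joining the boundary points $x_1$ and $x_2$ is a Euclidean semicircle meeting the bounding plane orthogonally at $x_1$ and $x_2$. Consequently its center is the Euclidean midpoint of $x_1$ and $x_2$, and its Euclidean radius $b$ is exactly half the distance between the tangency points, i.e. $b = \frac{1}{2}\, d(x_1,x_2)$.

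From here I would simply feed this into Lemma \ref{lengthofg}. That lemma gives $g = \ln\!\left(\frac{b^2}{r_1 r_2}\right)$ for the length of the segment of $\gamma$ running between $H_1$ and $H_2$. Exponentiating yields $b^2 = r_1 r_2\, e^{g}$, hence $b = \sqrt{r_1 r_2\, e^{g}}$, and therefore $d(x_1,x_2) = 2b = 2\sqrt{r_1 r_2\, e^{g}}$, which is the desired identity.

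I do not anticipate any real obstacle: the statement is a restatement of Lemma \ref{lengthofg} solved for a different quantity, and the substitution is purely algebraic. The one point worth stating explicitly is the Euclidean fact that $b = \frac{1}{2} d(x_1,x_2)$; one should also note in passing that the degenerate situation $x_1 = x_2$ does not occur here, since then there is no geodesic arc of the required form connecting the two horoballs.
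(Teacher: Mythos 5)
Your proposal is correct and follows the paper's own argument exactly: observe that $d(x_1,x_2) = 2b$ because the geodesic is a semicircle with endpoints $x_1$ and $x_2$, then solve $g = \ln\!\left(\frac{b^2}{r_1 r_2}\right)$ from Lemma \ref{lengthofg} for $b$. No differences worth noting.
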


\begin {proof}
The distance between the two horoballs is the twice the Euclidean radius $b$ of the geodesic arc that runs from $H_1$ to $H_2$.  By the lemma above, $g = \ln{(\frac{b^2}{r_1 r_2} )}$, and so $d(x_1, x_2) = 2b = 2 \sqrt{r_1 r_2 e^g } $.

\end {proof}

We  now use these lemmas to provide conditions that ensure that a bracelet of length 4, 5 or 6  in a ball-and-beam pattern for a  manifold of one or two cusps corresponds to an $n$-disk. Note that we need not consider the case where a ball-and-beam pattern contains a 3-disk.  By Lemma 5.1 in \cite{adams95}, given a noncompact, orientable hyperbolic 3-manifold M and vertical geodesic $\alpha$, the ball-and-beam pattern for $(M, \alpha)$ cannot contain a 3-disk.

\begin{proposition}\label{prop:4}
Let $M$ be a hyperbolic 3-manifold of one or two cusps and let $\alpha$ be a vertical geodesic of length less than $\ln{(\sqrt{2})}$.  If M has two cusps, we require that $\alpha$ run from one cusp to another. Then if the ball-and-beam pattern of $(M, \alpha)$ contains a 4- bracelet, $\alpha$ must be an unknotting tunnel.
\end{proposition}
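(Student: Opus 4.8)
The plan is to invoke Lemma 2.1: it suffices to show that a $4$-bracelet in the ball-and-beam pattern of $(M,\alpha)$ is never blocked when $g := \operatorname{length}(\alpha) < \ln\sqrt2$, i.e. that it corresponds to a $4$-disk. I would first normalize by applying an isometry so that one horoball of the bracelet is $H_\infty$, with $\partial H_\infty$ at Euclidean height $1$. Writing the bracelet as $H_\infty, H_a, H_b, H_c$ with beams $H_\infty H_a$, $H_a H_b$, $H_b H_c$, $H_c H_\infty$, the first and last beams are the vertical segments over $a$ and $c$; since such a segment runs from height $1$ to the top of its horoball and has length $g$, the computation behind Lemma 2.2 forces $H_a$ and $H_c$ to have Euclidean radius $r = \tfrac12 e^{-g}$, so $r > \tfrac{1}{2\sqrt2}$ by hypothesis. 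Since $H_b$ has interior disjoint from $H_\infty$, its radius is $r_b \le \tfrac12$. Lemma 2.3 (with $r_a=r_c=\tfrac12 e^{-g}$) then gives $d(a,b)=d(b,c)=\sqrt{2r_b}\le 1$, hence by the triangle inequality $d(a,c)\le 2\sqrt{2r_b}\le 2$, while $d(a,c)\ge 2r=e^{-g}>\tfrac{1}{\sqrt2}$ because $H_a,H_c$ are disjoint. These are the metric facts I carry forward.

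Next I would exhibit an explicit candidate disk $D$ whose boundary is the \emph{core loop} $\gamma$ of the bracelet --- the loop running alongside the four beams, over the tops of $H_a,H_b,H_c$, and across the chord of $\partial H_\infty$ joining the feet of the two vertical beams --- which is plainly nontrivial in the $4$-bracelet. Start from the ideal triangle with ideal vertices $a$, $c$, $\infty$; after deleting the parts inside $H_\infty, H_a, H_c$ this is a totally geodesic region $T'$ in the vertical plane over the line through $a$ and $c$, lying in the slab $0<t\le 1$, whose boundary consists of the two vertical beams, arcs on $\partial H_a$ and $\partial H_c$, a segment of $\{t=1\}$, and the segment of the geodesic $ac$ between $H_a$ and $H_c$. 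Then replace that last segment: the geodesic $ac$ is homotopic rel endpoints to the broken geodesic $a\to b\to c$ of the two middle beams, and I would glue onto $T'$ the bigon $B$ swept by this homotopy, chosen so that $B$ passes over the top of $H_b$, obtaining a disk $D$ with $\partial D=\gamma$. One then verifies $D$ is embedded, handling the degenerate shapes of the bracelet ($a,b,c$ nearly collinear, or $r_b$ small) at this stage.

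The heart of the argument is to show that $\operatorname{int}(D)$ is disjoint from every horoball and every beam of the pattern, so that $D$ is a genuine $4$-disk. Disjointness from $H_a,H_b,H_c$ is built into the construction, since $D$ is pinched against them along $\gamma$. For any other horoball $H_x$ of the pattern one has $r_x\le\tfrac12$ and $H_x$ disjoint from $H_a,H_b,H_c$; using the distances from the first step, I would show that either the tangency point $x$ lies outside the vertical projection of $D$, or, if it lies inside, the disjointness constraints force $r_x$ so small that the top of $H_x$ (at height $2r_x$) cannot reach $D$. The extremal configuration here --- a horoball wedged tangentially against $H_a$, $H_b$, and $H_c$ inside the ``hole'' of the bracelet --- is exactly what produces the threshold $\sqrt 2$. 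For beams: any beam of the pattern is a geodesic arc between two horoball centers of length $g$, hence by Lemma 2.3 a semicircle of Euclidean radius $\sqrt{r r' e^{g}}<1$, and I would argue that a beam crossing $\operatorname{int}(D)$ would force two horoballs of the pattern to violate disjointness, again via the bounds of the first step. I expect the intruding-horoball case to be the main obstacle: pinning down the tightest configuration and running the resulting optimization so that it lands precisely on $\ln\sqrt2$ is the crux, with the embeddedness of $D$ and the degenerate bracelet shapes a secondary technical matter.
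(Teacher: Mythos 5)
Your setup is fine and matches the paper's opening moves: reduce to showing the $4$-bracelet cannot be blocked via Lemma 2.1, normalize so one ball of the bracelet is $H_\infty$, and deduce that the two bracelet balls tangent to the vertical beams have Euclidean radius $\tfrac12 e^{-g}$ while every other ball has radius at most $\tfrac12$. But the heart of the proof is missing. You explicitly defer the one step that produces the number $\ln\sqrt2$ (``I would show\dots'', ``running the resulting optimization \dots is the crux''), and, more seriously, you aim that optimization at the wrong configuration. A single horoball wedged tangentially against $H_a$, $H_b$, $H_c$ inside the hole of the bracelet is not an obstruction at all: since the disk is only required to have interior in the complement of the balls and beams, a lone intruding ball can always be removed by an isotopy of the disk around it (the paper uses exactly this observation later, in the proof of the elder sibling theorem). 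The genuine obstruction is a \emph{beam} of the pattern crossing the disk, i.e.\ a pair of blocking balls $H_e$, $H_f$ joined by a lift of $\alpha$ that reaches over the bracelet, and it is this configuration -- not the wedged single ball -- that yields the threshold. In the paper's extremal picture $H_e$ and $H_f$ have radius $r\le\tfrac12$, their connecting beam passes over a bracelet ball of radius $\tfrac12 e^{-g}$, the middle bracelet ball is squeezed under the beam (so $r_c\le \tfrac{r e^g}{4}$), and a Pythagorean relation among $d(a,c)$, $d(c,e)$, $d(a,e)$ combined with the disjointness bound $d(a,e)^2\ge 2re^{-g}$ gives $g\ge\ln\sqrt{\tfrac{4}{1+2r}}\ge\ln\sqrt2$. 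Your sketch never isolates this configuration, so the asserted bound is not established; your remark that a crossing beam ``would force two horoballs to violate disjointness'' is precisely the statement that needs this quantitative argument.

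There is also an avoidable structural inefficiency: you try to build one explicit embedded disk (ideal triangle over $a$, $c$, $\infty$ truncated by horoballs, plus a bigon over the top of $H_b$) and then prove that this particular disk meets nothing. That forces you to rule out every intersecting horoball, which is both harder than necessary and not what the definition of ``blocked'' requires -- the paper instead argues contrapositively that any blocking requires a blocking pair whose geometry forces $g\ge\ln\sqrt2$, and never needs an explicit disk. If you want to salvage your route, replace the single-ball analysis by the blocking-pair analysis above; as written, the proposal has a genuine gap at exactly the step where the constant $\ln\sqrt2$ must appear.
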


\begin{proof}
Suppose we have a  hyperbolic manifold of one or two cusps with vertical geodesic $\alpha$ such that the hyperbolic length of $\alpha$ in $H^3$ is $g$.   We will show that for small $g$, the wrist hole of a 4-bracelet becomes ``too small"  to be blocked by other balls and beams.  

We consider a 4-bracelet that has $H_\infty$ as one of its horoballs, since we can always apply an isometry to make this the case. For convenience, we label the two horoballs in the 4-bracelet connected to $H_\infty$ as $H_a$ and $H_b$, and the ball in between them $H_c$.  Note that since $H_a$ and $H_b$ are both connected to $H_\infty$ by a segment of $\alpha$ with length $g$, they are the same distance from $H_\infty$ and thus have the same Euclidean height.  Let the two blocking balls be labeled $H_e$ and $H_f$, and note that in order to block, they must be connected with a copy of the vertical geodesic that also has length $g$.  We are looking for the shortest value for $g$ for which this 4-bracelet can be blocked, so we can consider $H_e$ and $H_f$ to be equal in size, since decreasing the size of one of these balls to make one smaller than the other never allows a bracelet to be blocked that could not have been blocked before by the two equal-sized balls.   

Because $H_c$ is connected to $H_a$ and $H_b$ by beams of the same length, it must be that $c$, the point of tangency with the x-y plane of  $H_c$, is equidistant from $a$ and $b$.  Moreover, we can assume that $c$ is in line with $a$ and $b$, since moving it out of line with $a$ and $b$ while maintaining $g$ forces $a$ and $b$ to come closer together, which forces one of $H_e$ to $H_f$ to be smaller. As mentioned before, decreasing the size of $H_e$ or $H_f$ does not allow any bracelet to be blocked that could be blocked with larger horoballs.

\begin{figure}[ht]
\centering
\includegraphics*[viewport=0 0 230 190]{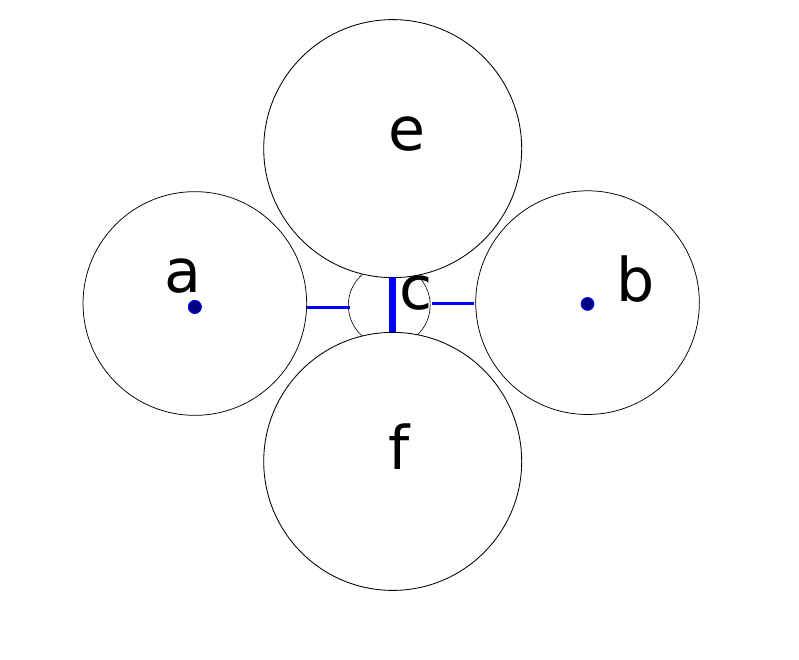}
\caption{A view from above of a blocked 4-disk.}
\label{blockedfour}
\end{figure}

Let $r$ be the radius of $H_e$ and $H_f$, and let  the shortest Euclidean distance between the two horoballs be called $d$. Since the distance in the x-y plane between the centers of $H_e$ and $H_f$ is $2r +d$, we can use Lemma 2.2 above to calculate that $g = \ln{ ( \frac{ (r+ \frac{d}{2})^2}{r^2} ) } = \ln{ (1 + \frac{d}{r} + \frac{d^2}{4r^2} ) } $. 

Next, we can set a bound on the Euclidean radius $r_c$ of $H_c$ for given values of $r$ and $g$.  The largest $H_c$ can be while fitting under the blocking balls $H_e$ and $H_f$ is in the case where it is tangent to both of them.  In this case, it has radius $\frac{r e^g}{4}$, and so it must be true that $r_c \leq \frac{r e^g}{4}$.   

$H_a$ and $H_b$ are hyperbolic distance $g$ from $H_\infty$, which is bounded by a Euclidean plane at height 1, and since the hyperbolic distance of a vertical line segment running from height $z_1$ to height $z_2$ above the x-y plane is given by $\ln({ \frac { z_1 }{ z_2 } )}$, it follows that $H_a$ and $H_b$ have Euclidean height equal to $e^{-g}$ and Euclidean radius $\frac { e^{-g} }{2} $.

Now, we also must ensure that $H_a$ and $H_b$ do not intersect $H_e$ or $H_f$.  Suppose that $H_a$ and $H_b$ are tangent to $H_e$ and $H_f$.  Then the distance from $a$ to $d$ on the x-y plane is $\sqrt{2r e^{-g}} $.  In general, then, $d(a, e) \geq \sqrt{2r e^{-g} }$.  Lastly, we can calculate Euclidean distance from $c$ to $e$ on the x-y plane, since $d(c, e) = \frac{1}{2} d(e, f)$, which by Lemma 1 is equal to $\frac {1}{2} (2 \sqrt{r^2 e^g}) = r \sqrt{e^g}$. 

 Now that we have bounds on the lengths of the line segments $\bar{ac}$, $\bar{ae}$, and $\bar{ec}$ for a blocked 4-disk, we can relate these quantities using the fact that by the Pythagorean Theorem, $d(a,c) ^2 + d(c,e)^2 = d(a,e)^2$, and thus $ \frac {re^g}{2} + r^2 e^g \geq d(a,c) ^2 + d(c,e)^2 = d(a,c)^2 \geq 2r e^{-g}$.  The fact that $ \frac {re^g}{2} + r^2 e^g \geq  2r e^{-g} $ implies that $g \geq \ln {( \sqrt{ \frac {4}{1+ 2r} } ) }$.  Note that as $r$ increases, this lower bound on $g$ decreases.

  However, since $H_e$ and $H_f$ cannot have Euclidean height greater than 1, $r$ cannot be any larger than $\frac{1}{2} $, and thus $g$ cannot be any shorter than $ \ln{( \sqrt {2} )} $.  Hence, for a 4-bracelet to be blocked, $g$ must be at least $\ln{(\sqrt{2})}$.  So if  $g < \ln{(\sqrt{2})}$ and if the ball-and-beam pattern for $(M, \alpha)$ contains a 4-bracelet, then the 4-bracelet bounds a disk, and so $\alpha$ must be an unknotting tunnel.

\end{proof}

\begin{proposition} \label{prop:45}
Let $M$ be a one-cusped hyperbolic 3-manifold and let $\alpha$ be a vertical geodesic of length less than 0.168474.  Then if the ball-and-beam pattern associated with $(M, \alpha)$ contains a 4-bracelet or a 5- bracelet,  $\alpha$ must be an unknotting tunnel.
\end{proposition}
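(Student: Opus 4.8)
\section*{Proof proposal}

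The plan is to dispose of the 4-bracelet case at once and then concentrate on the 5-bracelet. Since $0.168474<\ln(\sqrt{2})$, Proposition~\ref{prop:4} already shows that a vertical geodesic of length less than $0.168474$ possessing a 4-bracelet is an unknotting tunnel, and this applies whether $M$ has one or two cusps. The reason the hypothesis here is restricted to one cusp is the 5-bracelet: a 5-bracelet is a cycle of odd length, hence its horoballs cannot be 2-colored, whereas if $M$ had two cusps and $\alpha$ ran between them, every beam would join a horoball over one cusp to a horoball over the other, forcing all bracelets to have even length. So it remains to prove that when $M$ has one cusp, $g<0.168474$, and the ball-and-beam pattern of $(M,\alpha)$ contains a 5-bracelet, that 5-bracelet is unblocked; the $n$-disk criterion quoted above (Lemma~2.1) then finishes the argument.

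For the 5-bracelet I would mimic the extremal analysis of Proposition~\ref{prop:4}. Normalize by an isometry so that one horoball of the bracelet is $H_\infty$, with bounding horosphere at Euclidean height $1$, and label the two bracelet horoballs adjacent to $H_\infty$ by $H_a$ and $H_b$ and the remaining two by $H_c$ and $H_d$, so the cycle reads $H_\infty - H_a - H_c - H_d - H_b - H_\infty$. As in Proposition~\ref{prop:4}, $H_a$ and $H_b$ have Euclidean height $e^{-g}$ and radius $\frac{1}{2}e^{-g}$, and, because $M$ has a single cusp, every horoball in the pattern — in particular $H_c$, $H_d$, and any blocking balls — has Euclidean height at most $1$ and hence radius at most $\frac{1}{2}$. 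The first task is to reduce a hypothetical blocked configuration to a most economical one: arguing as before, the blocking balls (two of them, of equal radius $r\le\frac{1}{2}$, in the relevant case) may be symmetrized, $H_c$ and $H_d$ may be assumed exchanged by the reflection swapping $H_a$ and $H_b$, their radii bounded in terms of $r$ and $g$ by tangency (as with $r_c\le\frac{re^g}{4}$ in the 4-bracelet case), and any move that pulls $a$ and $b$ closer together or shrinks a blocking ball cannot turn an unblocked bracelet into a blocked one. This cuts the configuration space down to a one- or two-parameter slice governed by $r$, the radii of $H_c,H_d$, and $g$.

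On this slice I would write out the Euclidean distances among the tangency points $a,b,c,d$ and the tangency points of the blocking balls, each expressed through Lemma~\ref{lengthofg} and the distance formula $d(x_1,x_2)=2\sqrt{r_1r_2e^g}$ (for instance $d(a,b)=e^{-g}\sqrt{e^g}$, with analogous identities along the bracelet and between bracelet balls and blocking balls). Requiring the bracelet balls to be pairwise interior-disjoint, disjoint from the blocking balls, and requiring the blocking balls actually to span the wrist hole yields a chain of Pythagorean and triangle inequalities among these distances; together with $r\le\frac{1}{2}$ this forces $g$ above a threshold, and solving the resulting sharp inequality numerically should produce the stated value $0.168474$. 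Hence $g<0.168474$ rules out a blocked 5-bracelet, completing the proof.

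I expect the main obstacle to be the reduction to the extremal configuration rather than the closing computation. The 5-bracelet carries more freedom than the 4-bracelet — two interior horoballs $H_c,H_d$ whose sizes and positions vary independently, and a priori several possible blocking configurations — so one must argue carefully that symmetrizing and shrinking never improves a blocking, and that it suffices to treat the tight case in which the relevant balls are mutually tangent. Once the configuration space is pared down to the correct low-dimensional slice, the geometry is entirely controlled by Lemma~\ref{lengthofg}, the distance formula, and the height bound $r\le\frac{1}{2}$, and the threshold $0.168474$ emerges from a routine if somewhat lengthy optimization.
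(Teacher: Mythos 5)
Your proposal follows essentially the same route as the paper's proof: handle the 4-bracelet via Proposition~\ref{prop:4}, normalize the 5-bracelet to contain $H_\infty$, symmetrize (equal blocking balls, equal $H_c$ and $H_d$, straightened/collinear tangency points), convert beam lengths to Euclidean distances via Lemma~\ref{lengthofg} and its corollary, impose disjointness through Pythagorean-type inequalities, and use the bound $r\le \tfrac12$ on horoball radii to extract the numerical threshold $0.168474$. The only substantive step you leave implicit --- the explicit chain of inequalities culminating in the bound $r_c \le \frac{r_e e^g}{2+\sqrt{4-e^{2g}}}$ and the final monotone-in-$r_e$ estimate --- is exactly what the paper carries out, so your plan matches its argument.
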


\begin{proof}
If the ball-and-beam pattern for $(M, \alpha)$ contains a 4-bracelet, then the fact that $\alpha$ is an unknotting tunnel follows from Proposition \ref{prop:4}.  All that remains to show is that for sufficiently small vertical geodesic length $g$, it is impossible for a 5-bracelet to be blocked.

We take the 5-bracelet to contain $H_\infty$, since an isometry can always be applied to make this the case.  The other horoballs of the 5-bracelet are labeled $H_a$, $H_b$, $H_c$ and $H_d$, and the blocking balls $H_e$ and $H_f$ as in Figure \ref{blockedfive}.  As before, $H_a$ and $H_b$ are both a hyperbolic distance $g$ from $H_\infty$, and so have the same radius, which we call $r_a$.  Also, making one of $H_e$ and $H_f$ smaller does not allow any bracelet to be blocked that could not already be blocked before, so we take $H_e$ and $H_f$ to be the same size, and call their Euclidean radius $r_e$.  Moreover, we may take the centers a, b, c, and d of horoballs $H_a$, $H_b$, $H_c$, and $H_d$ to be collinear for the following reason.  The Euclidean distances between each of these balls is determined by their radius and the length $g$ of the beams between them.  If the horoballs were arranged not in a straight line, and it was still possible for the 5-bracelet to be blocked, we could ``straighten out" the path in the x-y plane from a to c to d to b, maintaining the radii of each horoball and the distances between their centers.  Since the original bracelet must have been disjoint from the two equal-sized blocking balls $H_e$ and $H_f$, the straightened-out bracelet is also disjoint from the blocking balls $H_e$ and $H_f$, and so the straightened version of the original 5-bracelet can also be blocked.   Thus, we need only consider the case where a, b, c, and d are collinear already.  

We can also assume that $H_c$ and $H_d$ are of the same size.  Suppose there is a 5-bracelet that is blocked, and that $H_d$ is smaller than $H_c$.  If $H_d$ is expanded while $g$ and the Euclidean radius of the other balls are held the same, then the Euclidean distance between $H_d$ and the neighboring balls in the bracelet must increase.  Because increasing this distance does not cause any horoballs to intersect, the process of expanding $H_c$ will create a valid new 5-bracelet with beams of length $g$ that is blocked.  So, to find the shortest length $g$ for which a 5-bracelet can be blocked, we need only consider the case where $H_c$ and $H_d$ are the same size.  We call their Euclidean radius $r_c$.   

In a 5-bracelet with horoballs configured in this way, we let $x$ denote the point on the x-y plane that is midway between $c$ and $d$ and between $e$ and $f$.

\begin{figure}[ht]
\centering
\includegraphics*[viewport=0 0 250 250]{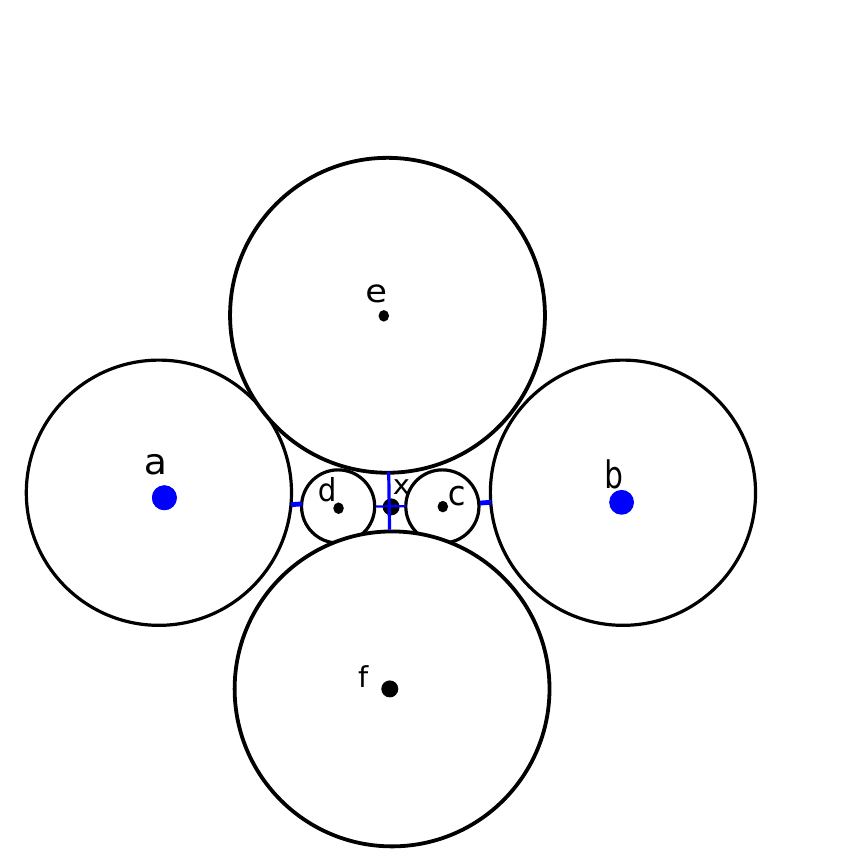}
\caption{A view from above of a blocked 5-bracelet, with the beams corresponding to $\alpha$ marked in blue.}
\label{blockedfive}
\end{figure}

Because $g$ is the hyperbolic length of the line segment connecting the top of $H_a$ and $H_b$ to the boundary of $H_\infty$, which is the plane of height 1, the Euclidean radius of $H_a$ and $H_b$ is $r_a = \frac{e^{-g}}{2}$.   The length of  the line segment $\bar{cx}$ is half the distance between $c$ and $cd$, so applying Corollary 2.3,  $d(c,x) = r_c \sqrt{e^g}$. Similarly, $d(e,x) = r_e \sqrt{e^g}$, and $d(a,c) = 2 \sqrt{r_a r_c e^g} = \sqrt{2r_c}$.  In order for $H_e$ and $H_f$ to be disjoint from $H_c$, and $H_d$, it must be the case that $d(a,e) \geq 2 \sqrt{r_c r_e}$.  Thus $r_c^2 e^g + r_e^2 e^g = d(c,x)^2 + d(e,x)^2 = d(c,e)^2 \geq 4r_c r_e $.  From the fact that  $r_c^2 e^g + r_e^2 e^g  \geq 4r_c r_e $ it follows that $r_c \leq \frac{r_e e^g}{2 + \sqrt{ 4-e^{2g}}}$.

Now, in order for $H_e$ and $H_f$ to be disjoint from $H_a$ and $H_b$, it must be true that $d(a,e) \geq 2\sqrt{r_a}{r_c} = \sqrt{2r_e e^{-g}}$.  So we have that $2r_e^{-g} \leq d(a,e)^2 = d(a,x)^2 + d(e,x)^2 = 2r_c + 2r_c \sqrt{2r_c e^g} + r_c e^g$, and applying the previously obtained inequality, this implies that $2r_e^{-g} \leq r_e^2 e^g + \frac{2r_e e^g}{2 + \sqrt{ 4- e^{2g} } } +\frac{2r_e e^{2g} \sqrt{r_e}} { {({2 +\sqrt{4 - e^2g}})}^{3/2}} + \frac{{r_e}^2 e^3g}{ {( 2+ \sqrt{4-e^{2g}})}^2}$. Since $r_e$ is the radius of a horoball in the cusp diagram, it must be between 0 and $\frac{1}{2}$.  The previous inequality results in a lower bound for $g$ that is strictly decreasing over these values for $r_e$.  Thus, the smallest lower bound for $g$ that could possibly be obtained from these restrictions occurs when $r_e$ is equal to $\frac{1}{2}$ in which case we can compute that $g$ must be at least 0.168474...  Thus, if $g < 0.168474... $ a 5-bracelet cannot be blocked, and so the presence of a 5-bracelet implies that the ball-and-beam pattern of the manifold and geodesic pair $(M, \alpha)$ contains an n-disk and hence $\alpha$ is an unknotting tunnel.
\end{proof}

Although it may seem that perhaps relatively few vertical geodesics  have length $g< 0.168474... $, note that in fact, in every one-cusped manifold there is a vertical geodesic of length 0.  The maximal cusp is obtained by expanding the cusp until it first becomes tangent to itself, and the arc that passes perpendicularly through this point of tangency with endpoints perpendicular to the two cusps is a vertical geodesic of length 0.  Moreover, 5-bracelets do arise in ball-and-beam patterns for orientable manifolds, as in this example from the horoball pattern for the (-2, 3,7) pretzel knot (Figure \ref{5bracelethoroball}) where we take the vertical geodesic of length 0 corresponding to tangency points of the horoballs.

\begin{figure}[ht]
\centering
\includegraphics*[viewport=10  10 100 100]{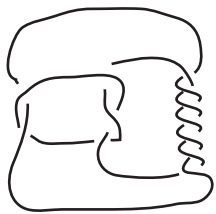}
\caption{The (-2, 3, 7) pretzel knot.}
\label{237}
\end{figure}

\begin{figure}[ht]
\centering
\includegraphics*[viewport=90 80 300 290]{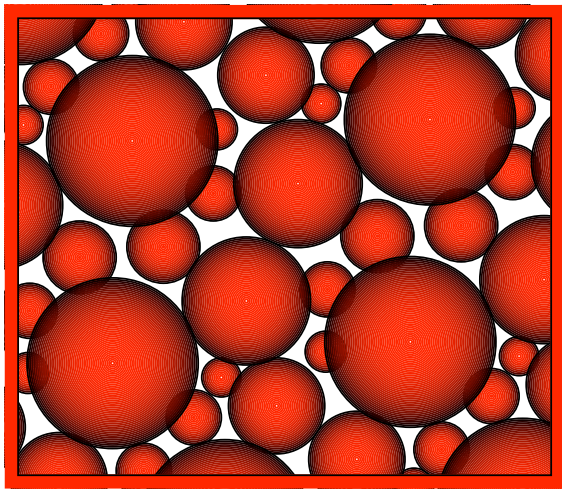}
\caption{A section of the horoball pattern for the (-2, 3,7) pretzel knot produced by \cite{snappea}.  When the proper length-0 vertical geodesic is added to make this a ball-and-beam pattern, the sequence of the four tangent horoballs running from the top right to the bottom left in this illustration, together with $H_\infty$ form an unblocked 5-bracelet that corresponds to a 5-disk.}
\label{5bracelethoroball}
\end{figure}

We conclude by noting that the same type of argument does not work to find a value for $g$ below which an $n$-bracelet in the ball-and-beam pattern must correspond to an $n$-disk, for $n$ equal to 6 or greater.   Consider Figure \ref{sixbracelet1} where we see an arrangement of horoballs and beams corresponding to a  geodesic of length zero, where a 6-bracelet defined by the hexagon is  blocked by the geodesic with endpoints at the center horoball and the horoball at infinity and thus does not correspond to an n-disk.


However, one can show that in fact, the configuration depicted cannot occur for a hyperbolic 3-manifold of one or two cusps.

\begin{proposition}
Let $M$ be a  hyperbolic 3-manifold of one or two cusps and let $\alpha$ be a vertical geodesic of length $0$ that connects the two distinct cusps if $M$ is two-cusped.  Then if the ball-and-beam pattern associated with $(M, \alpha)$ contains a bracelet of length 4, 5, or 6,  then $\alpha$ must be an unknotting tunnel.
\end{proposition}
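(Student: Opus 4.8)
I would split the argument according to the size of the bracelet. A $4$-bracelet, whether $M$ has one cusp or two, is covered by Proposition~\ref{prop:4}, since $0<\ln\sqrt{2}$. A $5$-bracelet in a one-cusped $M$ is covered by Proposition~\ref{prop:45}, since $0<0.168474$; and when $M$ has two cusps and $g=0$ the computation in the proof of Proposition~\ref{prop:45} goes through unchanged, as it uses only that the two bracelet horoballs meeting $H_\infty$ are tangent to it, hence of Euclidean radius $1/2$, and that every horoball of a maximal cusp collection has Euclidean radius at most $1/2$. So the one genuinely new case is a blocked $6$-bracelet, which I treat for $g=0$.

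Work in the maximal cusp, so that $H_\infty$ is the plane $\{z\ge 1\}$ and every other horoball has Euclidean radius at most $1/2$. Assume the ball-and-beam pattern of $(M,\alpha)$ contains a $6$-bracelet; I will show it cannot be blocked, and then the lemma of \cite{adams95} recalled above gives the conclusion. Following Figure~\ref{sixbracelet1}, take the bracelet to be a ring of six finite horoballs $H_{p_1},\dots,H_{p_6}$ joined cyclically by beams, and write $P_i$ for the point where $H_{p_i}$ meets the plane and $\rho_i\le 1/2$ for its radius. Since $\alpha$ has length $0$, each of the six beams is a single point of tangency, so no beam can separate a disk from the bracelet on its own; hence if the bracelet were blocked, the region enclosed by the ring must be filled by horoballs, and at least one of them---in order to reach up to $H_\infty$ rather than be capped off from above by a disk that stays just below the plane---must be tangent to $H_\infty$, hence of radius exactly $1/2$. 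In the basic case this blocking ball is a single $H_0$; write $O$ for its point of tangency with the plane.

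The plan is then a packing estimate. For $H_0$ to be enclosed, the six bracelet horoballs must surround it, appearing in cyclic order around $O$; and since a gap between $H_0$ and a bracelet horoball leaves a channel through which a spanning disk can be pushed, $H_0$ must be tangent to each $H_{p_i}$ while consecutive $H_{p_i},H_{p_{i+1}}$ are tangent to one another. Using $d(x_1,x_2)=2\sqrt{r_1r_2e^g}$ with $g=0$ one gets $|OP_i|=\sqrt{2\rho_i}$ and $|P_iP_{i+1}|=2\sqrt{\rho_i\rho_{i+1}}$, and the law of cosines then gives $\cos\angle P_iOP_{i+1}=(\rho_i+\rho_{i+1}-2\rho_i\rho_{i+1})/(2\sqrt{\rho_i\rho_{i+1}})$, which is at least $1/2$ whenever $\rho_i,\rho_{i+1}\le 1/2$, so $\angle P_iOP_{i+1}\le\pi/3$. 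Since $\sum_{i=1}^6\angle P_iOP_{i+1}=2\pi$, every inequality must be an equality: all $\rho_i=1/2$, and the configuration is rigid---the regular hexagonal ``flower'' of Figure~\ref{sixbracelet1}, with $H_0$ and all six petals of radius $1/2$, mutually tangent, and all tangent to $H_\infty$. It remains to rule this configuration out. Metric information no longer suffices here, since the flower is a legitimate non-overlapping horoball packing and such flowers do occur in actual cusp diagrams; the exclusion has to use that the six ring beams and the beam $H_0H_\infty$ are all lifts of the \emph{single embedded} geodesic $\alpha$. There is then a covering transformation taking the tangency point $H_0\cap H_\infty$, which sits at height $1$ directly above $O$, to the tangency point $H_{p_1}\cap H_{p_2}$, which sits at height $1/2$; chasing such an element against the tangency pattern of the flower (all eight horoballs being tangent to $H_\infty$ at the spacing of one fixed hexagonal lattice) should produce a transformation incompatible with discreteness, or a nontrivial elliptic element, contradicting the fact that $M$ is a hyperbolic manifold.

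I expect this last step to be the real obstacle. Everything up to the rigidity statement is routine once the distance lemmas and the $H_\infty$-height bound are in hand, and the case where the blocking set is a wall of several horoballs rather than a single $H_0$ should reduce to it in the same way. The content lies in identifying precisely which relation the hexagonal flower imposes on the beams, and in showing that relation cannot occur in a torsion-free discrete group uniformizing a one- or two-cusped hyperbolic $3$-manifold.
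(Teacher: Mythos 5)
Your reduction of the $4$- and $5$-bracelet cases to Propositions \ref{prop:4} and \ref{prop:45} matches the paper (and your remark about extending Proposition \ref{prop:45} to two cusps is a reasonable patch for a point the paper passes over silently), and your rigidity analysis of a blocked $6$-bracelet is in the right spirit: the paper, after moving one blocking ball to $H_\infty$, essentially asserts the regular ``flower'' picture of Figure \ref{sixbracelet1}, whereas you try to derive it. Two caveats there. First, your justification that $H_0$ must be tangent to each ring ball (the ``channel'' argument) is not valid: once the hexagonal loop has winding number one about the vertical beam, every disk it bounds meets that beam, tangent or not, so blocking does not by itself force tangency. The rigidity is nonetheless true and can be recovered from your own estimate by using the non-overlap inequalities $|OP_i|\ge\sqrt{2\rho_i}$ instead of equalities: one checks $|P_iP_{i+1}|^2=4\rho_i\rho_{i+1}\le |OP_i|^2+|OP_{i+1}|^2-|OP_i||OP_{i+1}|$ whenever $\rho_i,\rho_{i+1}\le\tfrac12$, so each angle is at most $\pi/3$, and the sum being $2\pi$ forces tangency and all radii equal to $\tfrac12$. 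Second, the case where the blocking pair consists of two finite balls is handled in the paper simply by applying an isometry sending one of them to $\infty$; your ``wall of horoballs'' reduction is vaguer than that and would need the paper's blocking-pair framework to be made precise.

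The genuine gap is that you stop exactly where the paper's proof begins. Ruling out the rigid flower is the entire content of the proposition: the paper observes that all seven beams lie in a single edge class, enumerates the nine possible orientation patterns on the hexagon edges up to rotation and reflection (Figure \ref{sixbracelet2}), and eliminates each one using orientation-preserving parabolic translations fixing $\infty$ together with the decomposition of the region under the hexagon into six vertical ideal regular tetrahedra, counting edges in the edge class (the $720^\circ$ of dihedral angle allows at most three tetrahedron classes) until some case forces a nontrivial isometry of the group with a fixed point. Your proposed endgame---produce a covering transformation carrying the tangency point $H_0\cap H_\infty$ to $H_{p_1}\cap H_{p_2}$ and contradict discreteness or torsion-freeness---cannot succeed as stated: the nonorientable census manifold m025 realizes exactly this blocked flower with a discrete, torsion-free group, as the paper notes immediately after the proposition. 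Any correct elimination must use orientability in an essential way, as the paper's case analysis does, and your sketch never invokes it; so the main idea of the proof is missing.
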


\begin{proof} By Propositions \ref{prop:4} and \ref{prop:45}, we need only consider bracelets of length 6. We  show that  the local situation that must occur for a bracelet of length six to be blocked when $\alpha$ has length 0 cannot occur.
   
 Suppose a bracelet of length six is blocked. Then choosing one of the blocking horoballs to be the horoball at infinity, we obtain a picture as in Figure \ref{sixbracelet1} where each of the depicted edges and the vertical edges are all in the same edge class. We assume the vertical edge points up out of the xy-plane.Considering the orientations on the edges in the bracelet, one finds that up to rotation and reflection, there are nine possibilities, listed in Figure \ref{sixbracelet2}. 
 
 \begin{figure}[ht]
\centering
\includegraphics*[viewport=130 450 400 700]{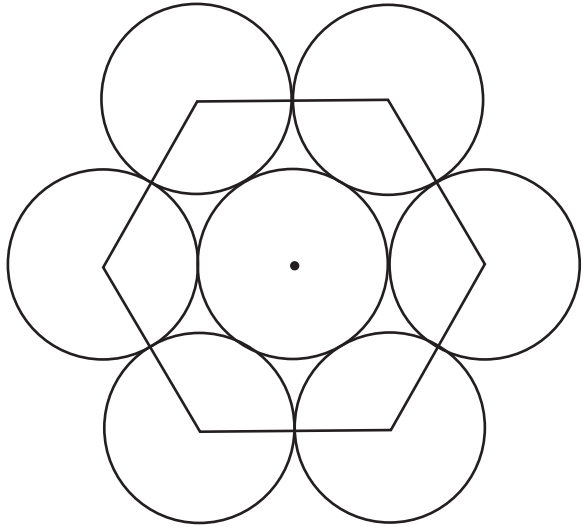}
\caption{A blocked six-bracelet when $g=0$.}
\label{sixbracelet1}
\end{figure}

\begin{figure}[ht]
\centering
\includegraphics*[viewport=60 430 400 700]{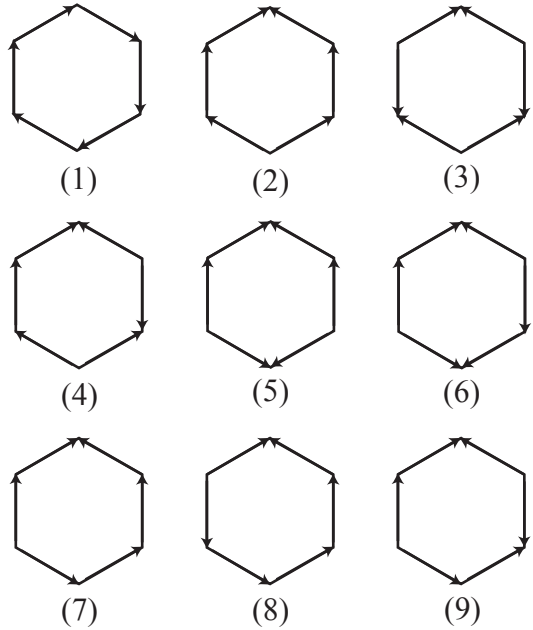}
\caption{Possible orientations on the 6-bracelet edges.}
\label{sixbracelet2}
\end{figure}
 
    We normalize so that the diameter of the horoballs depicted is 1. In general we seek a contradiction by showing that there is an isometry in the fundamental group that fixes points, a contradiction.  
    
    If there are two arrows with heads or tails that meet,  as occurs in all but case (1), there must be two arrows with tails or heads that meet. This meeting at 120 degrees means that from the point of view of infinity, there must be two vertical parallelly oriented edges a distance $\sqrt{3}$ apart. Since any two such arrows must be related through a parabolic isometry that is a Euclidean translation, this can only occur when there are two opposite edges on the hexagon with parallel orientations. This eliminates Type (6) from consideration.
    
    In the cases of (5), (8), and (9), there is only one pair of parallelly oriented opposite edges so there must be a parabolic isometry fixing ${\infty}$ that identifies these two edges. Applying this parabolic isometry to the original hexagon gives a second hexagon attached to the first. In each case, there are two junctures of three arrows. For (5), there is a juncture with one arrow head and two arrow tails. For (9), there is a juncture with two arrow heads and one arrow tail. In both cases, this implies there must be a downward pointing vertical arrow a distance $\sqrt{3}$ from the upward pointing arrows. But the downward pointing arrow must have a flipped version of the hexagon around it, and there is no way to match the arrows on the flipped hexagon with the arrows on the existing hexagons to put the downward pointing arrow in the correct position relative to the upward pointing arrows. So this eliminates (5) and (9) from consideration.
    
    To eliminate the remaining cases, we will also consider the fact that each hexagon is the projection of a region in hyperbolic space that can be subdivided into six vertical ideal regular tetrahedra, the edges of which all pass through points of tangency of the eight horoballs depicted (including $H_\infty$). The single labelled edge class includes 12 of the edges of these tetrahedra, which together subtend an angle of 720 degrees. Hence some of these six tetrahedra must be identified with one another, and  they can generate at most three tetrahedral equivalence classes. 
    
Note that no two tetrahedra can be identified by an identification that sends the central edge of one to the central edge of another, as then that edge would be a fixed point set for the isometry, contradicting the fact all isometries in the fundamental group must be fixed point free.  Define a standard identification to be when tetrahedron  $T_1$ is identified to $T_2$ with an orientation preserving isometry such that the  central and outer edges of $T_1$ go to the outer and central edges of $T_2$ respectively.

      Suppose that there is a non-standard identification between two tetrahedra $T_1$ and $T_2$. Then a labelled edge on $T_1$ must go to an unlabelled edge on $T_2$. Since the labelled edges are opposite pairs, both of the labelled edges on $T_1$ must go to unlabelled edges. Hence, there are four edges in this equivalence class on each of the tetrahedra. Since, after tetrahedral identification, there can be at most six such edges in the edge class, this implies that there can be only two classes of tetrahedra, one with all tetrahedra having four edges in the edge class, which does not include an opposite pair, and one with all tetrahedral having two edges in the edge class that are an opposite pair. Call a representative of each tetrahedral class $A$ and $B$ respectively.  Then all faces of $A$ have two edges in the equivalence class, whereas no face of $B$ has two edges in the equivalence class. Hence, there is no way to glue faces of A to faces of B without adding edges to the edge class, a contradiction to the fact there are at most six edges in the edge class after identifications. Hence this cannot occur, and all identifications of tetrahedra must be standard.

    If two of the tetrahedra are identified, their outer edges must have the same orientation, either clockwise or counterclockwise on the hexagon, since the identification of $T_1$ with $T_2$  must send the outer edge of $T_1$ to the central edge of $T_2$ and the central edge of $T_1$ to the outer edge of $T_2$.

For Case (8), we know that the two opposite parallelly oriented edges are identified by a parabolic isometry $p$ fixing $\infty$. Label the tetrahedra as in Figure \ref{sixbracelet3}. Since one outer edge is oriented one way, while the other five are oriented the other, tetrahedron $A_1$ cannot be identified to any others. Since the parabolic isometry puts a copy of $A_1$ to the right of $B_3$, and no other tetrahedron can be identified to $A_1$, the only tetrahedron that could be identified to $B_3$ is $B_1$, as the isometry $f$ would also send a copy of $A_1$ to $p(A_1)$. However, then $p^{-1} f(A_1) = A_1$ and $p^{-1} f$ is a nontrivial element of the group of isometries with a fixed point, a contradiction. 

     So both $A_1$ and $B_3$ must be their own equivalence classes, which implies that the other four tetrahedra are identified with one another. But if $B_1$ is identified to $B_2$ by $g$, then $g(A_1)$ will be adjacent to $B_2$. If  $B_5$ is identified to $B_2$ by $h$, then $h(B_4)$ is adjacent to $B_2$ and is therefore identified with $g(A_1)$, contradicting the fact $A_1$ is the only tetrahedron in its equivalence class. So this case cannot occur.
    
 Consider cases (4) and (7). In both cases, two outer edges are oriented one way and the four others are oriented the opposite direction. Name the six tetrahedra $A_1, A_2$ and $B_1, B_2,B_3, B_4$ accordingly, as in Figure \ref{sixbracelet3}. We consider the case where $A_1$ and $A_2$  are not identified or are identified.
    
    \begin{figure}[ht]
\centering
\includegraphics*[viewport=20 500 400 610]{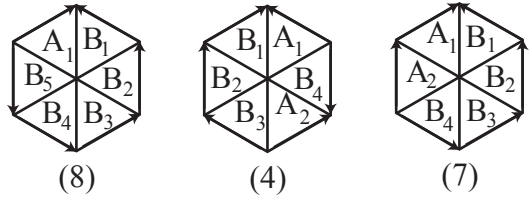}
\caption{Labelling tetrahedra for cases (8),  (4) and (7).}
\label{sixbracelet3}
\end{figure}
    
    If $A_1$ and $A_2$ are not identified, then they form two of the three final tetrahedral classes we can have. So all four of $B_1, B_2,B_3$,  and $B_4$ must be identified. But in both cases, the outer edge of $B_1$ shares an arrowhead with the outer edge of $A_1$ while the arrowhead of the outer edge of $B_3$ meets the tail end of the outer edge of $B_2$. When $B_1$ and $B_3$ are identified to $B_2$, these adjacent outer edges are sent to the same vertical edge but with opposite orientations, a contadiction. 
        
    If $A_1$ and $A_2$  are identified, we consider (4) first. Let $g$ be an isometry sending $A_1$ to $A_2$. Then $g$ sends $B_1$ to a vertical tetrahedron adjacent to $A_2$, with its far vertical edge pointed up. So there must be a parabolic translation $p$ that identifies it with the central edge of the hexagon. But this isometry will identify $g(B_2)$ with $B_2$. So $pg$ sends $B_2$ back to itself but is not the identity. So it is an element of the fundamental group that fixes points, a contradiction.
    
    For Case (7) when an isometry $g$ identifies $A_1$  to  $A_2$, $g(B_1)$ is adjacent to $A_2$. Then there exists a vertical edge a distance $\sqrt(3)$ to the left of the central edge. So there must be a horizontal parabolic isometry $p$ identifying these two edges.  Then $p(g(B_1))$ is identified with $B_2$. Then neither $B_3$ nor $B_4$ can be identified with $B_2$ since such an identification would place a downward pointing vertical edge a distance $\sqrt(3)$ to the right of the central edge, where we already have an upward pointing vertical edge.
    
    However, $(pg)^{-1}(B_2) = B_1$ which means $B_1$ is sent to a tetrahedron adjacent to itself by $(pg)^{-1}$. In particular, as in Figure \ref{sixbracelet4}, a flipped hexagon must be centered at this edge. This forces $B_1$ to be identified to $B_4$, a contradiction.
    
        \begin{figure}[ht]
\centering
\includegraphics*[viewport=30 500 400 650]{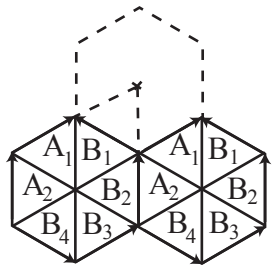}
\caption{Pattern for (7).}
\label{sixbracelet4}
\end{figure}

    This eliminates Cases (4) and (7).

    For Cases (2) and (3), there are three outer edges oriented one way and three oriented the other. Since we can have at most three tetrahedra after identification, this implies that three of the tetrahedra must be identified. Call them $A_1, A_2$ and $A_3$. 
    
    Consider Case (2). Assuming $A_2$ to be the middle one on the hexagon, the identification g of $A_1$ to $A_2$ sends $A_2$ to a vertical tetrahedron adjacent to $A_2$ with its far edge a vertical edge pointed down. The identification f of $A_3$ to $A_2$ sends the fourth tetrahedron adjacent to $A_3$ to a vertical tetrahedron adjacent to $A_2$ with its far edge pointed up. This is a contradiction.
  
      For Case (3), the parabolic isometry that identifies one edge of the hexagon with the opposite edge places a copy of the hexagon next t itself and forces there to be three arrowheads meeting pairwise at 120 degrees and three arrow tails meeting at 120 degrees. Hence there is a second parabolic that places a third copy of the hexagon so that their three centers form an equilateral triangle. Two copies of this triangle form a fundamental domain for the action of the parabolic subgroup on the plane. But there must be a downward oriented vertical edge within this fundamental domain and it must pass through a tangency point of horoballs, so it must be at the center of one of the two equilateral triangles making up this fundamental domain. However, then six more edges, one per tetrahedron, are in this same equivalence class. Hence, it must be the case that there are only two equivalence classes of tetrahedra, one consisitng of those with clockwise orientation on their outer edge and one consisting of those with counterclockwise orientation on their outer edge. However, when two tetrahedra are identified by a standard identification, an unlabelled edge is identified to a newly labelled edge, meaning there are even more eges in the dge class, and implying that this case cannot occur.

     For Case (1), the fact that there is a tail of an arrow touching the head of an arrow at a 120 degree angle implies that there must be a vertical copy of this edge coming out of the xy-plane a distance of exactly  $\sqrt{3}$ from a copy of this edge going into the plane. Hence there must be a flipped copy of the hexagon sharing an edge with the original hexagon.  However, now there are two heads of arrows meeting at an angle of 120 degrees. This means that there are two vertical edges oriented out of the xy-plane that are a distance $\sqrt{3}$ apart. However, any two vertical edges in the same class with the same orientation must be identified by a translational parabolic isometry.  This will cause two edges of the original hexagon to be identified in a manner that does not match their orientations, a contradiction.
     
\end{proof}

Note that a vertical geodesic $\alpha$ of length 0 can have a blocked 6-cycle of exactly the type described above if we allow the manifold to be nonorientable. See the manifold m025 from the cusped census of SNAPPEA \cite{snappea}, which consists of exactly three regular tetrahedra.

\section{The Elder Sibling Property}
Next, we present the elder sibling and almost elder sibling properties for a ball-and-beam pattern, which provide additional sufficient conditions for a vertical geodesic to be an unknotting tunnel.

\begin{definition}
A ball-and-beam pattern is said to be \textit{elder sibling} if every horoball $H_a$ in the pattern is connected to $H_\infty$ by an alternating sequence of horoballs and beams such that all of the balls in the chain have Euclidean radius greater than $H_a$.  Such a chain of balls and beams will be called an \textit{elder sibling chain} of $H_a$. 
\end{definition}

To better understand the properties of an elder sibling ball-and-beam pattern, we note the following.

\begin{lemma} \cite{adams95} In a ball-and-beam pattern, there are only finitely many Euclidean sizes of horoballs greater than or equal to a given value. 
\label{sizelemma}
\end{lemma}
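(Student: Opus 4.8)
The plan is to reduce the statement to a Euclidean packing estimate in the boundary plane of $H^3$. After applying an isometry we may normalize the ball-and-beam pattern so that the horoball centered at $\infty$ is $H_\infty = \{(z,t) : t \geq 1\}$ in the upper half-space model. The remaining horoballs of the pattern have pairwise disjoint interiors, being lifts of an embedded cusp (or of a maximal collection of cusps with disjoint interiors); in particular each is disjoint in its interior from $H_\infty$, so each has Euclidean diameter at most $1$. Since $M$ has finite volume, the cusp corresponding to $H_\infty$ has torus cross-section, so the subgroup $\Gamma_\infty$ of covering transformations fixing $\infty$ is generated by two independent parabolic translations; acting on the $xy$-plane $\partial H^3 \setminus \{\infty\}$, the group $\Gamma_\infty$ is a rank-two lattice of Euclidean translations with compact fundamental domain $P$ of finite area $A$.

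The one geometric input needed is a \emph{separation estimate}: if two horoballs of the pattern have Euclidean diameters $d_1, d_2$ and tangency points $x_1, x_2$ with the $xy$-plane, then $d(x_1, x_2) \geq \sqrt{d_1 d_2}$. This is immediate from Corollary 2.3: two horoballs with disjoint interiors are joined by a geodesic arc meeting each perpendicularly, of some length $g \geq 0$ (with $g = 0$ exactly when they are tangent), and by that corollary $d(x_1, x_2) = 2\sqrt{r_1 r_2 e^{g}} \geq 2 \sqrt{r_1 r_2} = \sqrt{d_1 d_2}$, where $r_i = d_i/2$.

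Now fix a value $\epsilon > 0$ and let $\mathcal{H}_\epsilon$ denote the set of horoballs of the pattern of Euclidean diameter at least $\epsilon$. By the separation estimate, the tangency points of the horoballs in $\mathcal{H}_\epsilon$ are pairwise at Euclidean distance at least $\epsilon$, so the open disks of radius $\epsilon/2$ centered at them are disjoint; by an area comparison with the $\epsilon/2$-neighborhood of $P$, at most some finite number $N = N(A,\epsilon)$ of these tangency points can lie in $P$. Hence $\mathcal{H}_\epsilon$ breaks into at most $N$ orbits under $\Gamma_\infty$. Finally, every element of $\Gamma_\infty$ acts on the upper half-space as a Euclidean isometry $(z,t) \mapsto (z + \tau, t)$, so it carries each horoball to one of the same Euclidean diameter; thus all horoballs in a single $\Gamma_\infty$-orbit share a common diameter, and $\mathcal{H}_\epsilon$ realizes at most $N$ distinct Euclidean sizes. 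This is exactly the assertion of the lemma.

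The argument is essentially routine; the only point requiring a little care is the separation estimate, and even that is a one-line consequence of Corollary 2.3 once one observes that disjointness of interiors forces the connecting geodesic to have nonnegative length. The remaining ingredients — that normalized horoballs have bounded Euclidean diameter, that the rank-two parabolic subgroup acts cocompactly on the plane (using finite volume), and that Euclidean diameter is a $\Gamma_\infty$-invariant — are standard facts about cusped hyperbolic $3$-manifolds, so the write-up should be short.
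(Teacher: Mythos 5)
Your proposal is correct and follows essentially the same route as the paper: both normalize $H_\infty$, use the finite-area fundamental domain of the rank-two parabolic subgroup acting on the $xy$-plane, and conclude by a packing argument plus translation-invariance of Euclidean size. The only difference is that you make the packing step explicit via the separation estimate $d(x_1,x_2)\geq 2\sqrt{r_1 r_2}$ from Corollary 2.3, which the paper simply asserts as ``there is not room'' for infinitely many large balls in the fundamental domain.
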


\begin{proof}
The fundamental domain for the cusp is formed by a parallelogram on the x-y plane of finite area. But there is not room inside it for infinitely many balls tangent to the x-y plane of size greater than or equal to a given value.  So there can be only finitely many horoballs of Euclidean radius greater than or equal to a given value in the fundamental domain, and hence there are only finitely many sizes of horoballs in the ball-and-beam pattern larger than a given size.

\end{proof}

Notice that our definition of elder sibling is equivalent to stating that every horoball must be connected to $H_\infty$ by a sequence of balls and beams such that the Euclidean radius of the balls in the sequence is strictly increasing.  Moreover, by Lemma \ref{sizelemma}, these chains of horoballs of increasing size must be finite.

Also, note that if a ball-and-beam pattern is elder sibling, it must be connected, since the fact that every ball can be joined to $H_\infty$ implies that any two balls can be joined by a path that goes from one ball to $H_\infty$ and then from $H_\infty$ to the other ball.  This fact will become useful with the help of an observation from \cite{adams95}.

\begin{lemma} \cite{adams95}
If a ball-and-beam-pattern is connected, it must contain an $n$-bracelet.
\label{connectedimpliesnbracelet}
\end{lemma}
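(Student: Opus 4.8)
The plan is to recast the statement graph-theoretically and then rule out the one remaining possibility --- that the pattern is a ``tree of horoballs'' --- by using the action of $\pi_1(M)$ on that tree together with the fact that a cusped finite-volume hyperbolic $3$-manifold group is freely indecomposable.

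\emph{Setup.} To the ball-and-beam pattern $P = p^{-1}(C \cup N(\alpha))$ I associate the graph $G$ whose vertices are the horoballs and whose edges are the beams, each beam giving an edge between the two horoballs at whose centers its endpoints lie (ghost intersections play no role, exactly as in the paper's notion of connectedness). Thus the pattern is connected precisely when $G$ is connected, and an $n$-bracelet is precisely a cycle in $G$. First I would note that $G$ has no loops and no bigons: a beam runs along the unique geodesic of $H^3$ joining the two ideal points that are the centers of its two endpoint horoballs, so there is at most one beam between any two horoballs and none from a horoball to itself. Hence every cycle in $G$ has length at least $3$ and is a genuine $n$-bracelet, and it remains only to show that the connected graph $G$ is not a tree.

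\emph{Main step.} Suppose $G$ were a tree. The deck group $\Gamma = \pi_1(M)$ preserves $P$, hence acts on the tree $G$ (subdividing each edge once if necessary so the action is without inversions). The stabilizer of a horoball is the corresponding rank-two parabolic cusp subgroup, while the stabilizer of a beam is trivial, since a beam is a simply connected component of $p^{-1}(N(\alpha))$ and so maps homeomorphically onto $N(\alpha)$. Moreover $\Gamma$ has no global fixed point on $G$: a subgroup of index at most two fixing a horoball would be contained in a copy of $\mathbf{Z}^2$, forcing $\Gamma$ to be virtually abelian, whereas the fundamental group of a finite-volume hyperbolic $3$-manifold is non-elementary. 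By Bass--Serre theory, an action on a tree without global fixed point (and without inversions) exhibits the group as an amalgamated product or HNN extension over an edge stabilizer; since our edge stabilizers are trivial, $\Gamma$ is a nontrivial free product. (Reading off the quotient graph of groups, $\Gamma \cong \mathbf{Z}^2 * \mathbf{Z}$ when $M$ has one cusp, and $\Gamma \cong \mathbf{Z}^2 * \mathbf{Z}^2$ when $M$ has two cusps and $\alpha$ joins them.) But $M$ is the interior of a compact irreducible $3$-manifold with infinite fundamental group, so $\Gamma$ is freely indecomposable --- a contradiction. Hence $G$ is not a tree, so $G$ contains a cycle, i.e., an $n$-bracelet.

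\emph{Where the difficulty lies.} The routine but delicate part is identifying the stabilizers correctly: that each beam is a simply connected lift of $N(\alpha)$ with trivial stabilizer, that horoball stabilizers are exactly the $\mathbf{Z}^2$ cusp subgroups, and that the action may be taken without inversions so that Bass--Serre applies. The one genuinely non-elementary input is free indecomposability of $\Gamma$ (equivalently, that $\Gamma$ is one-ended), which follows from irreducibility of $M$ together with infinitude of $\pi_1$; this is standard, but it is the fact that makes the whole argument go through.
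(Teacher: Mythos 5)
Your argument is essentially correct, but note that this paper never proves the lemma at all: it is quoted from \cite{adams95} without proof, so the comparison is with the original source rather than with anything in the present text. Your reduction to graph theory is sound (the graph is simple because the core of a beam is the unique geodesic joining two ideal points, so there are no loops or bigons and every cycle is an $n$-bracelet with $n\geq 3$), and the Bass--Serre step checks out: beam stabilizers are trivial, horoball stabilizers are the rank-two peripheral $\mathbf{Z}^2$ subgroups, there is no global fixed point since $\pi_1(M)$ is not virtually abelian, and in fact inversions cannot occur at all because an element swapping the endpoints of a beam would reverse its core geodesic and hence have a fixed point in $H^3$, contradicting freeness of the deck action, so the subdivision caveat is unnecessary. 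A tree action would then exhibit $\pi_1(M)$ as $\mathbf{Z}^2 \ast \mathbf{Z}$ or $\mathbf{Z}^2 \ast \mathbf{Z}^2$, contradicting free indecomposability, which follows from irreducibility via Kneser--Stallings. The same contradiction can be reached with less machinery, and this is closer in spirit to the ball-and-beam viewpoint of \cite{adams95}: if the connected pattern contained no bracelet, it would be a tree-like union of contractible balls and beams glued along contractible pieces, hence simply connected; since the components of $p^{-1}(C\cup N(\alpha))$ correspond to cosets of the image of $\pi_1(C\cup N(\alpha))$ in $\pi_1(M)$ and the fundamental group of a component is the kernel of that map, connectedness gives surjectivity and simple connectivity gives injectivity, so $\pi_1(M)\cong \pi_1(C\cup N(\alpha))\cong \mathbf{Z}^2\ast\mathbf{Z}$ (or $\mathbf{Z}^2\ast\mathbf{Z}^2$), the identical contradiction. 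So the two routes are equivalent in content; yours makes the equivariant structure and the role of one-endedness explicit at the cost of invoking Bass--Serre theory, while the covering-space phrasing needs only elementary covering space theory plus the same free indecomposability input.
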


To formulate the next lemma, we suppose that an n-bracelet is blocked. Define a {\it pair of blocking balls} to be any pair of  horoballs in the ball-and beam pattern connected by a beam that punctures a disk with boundary a nontrivial curve in the n-bracelet.  

Intuitively, it would seem that in the ball-and-beam pattern for a manifold and vertical geodesic pair $(M, \alpha)$, in order that an $n$-bracelet is blocked, the blocking balls need to be larger than a certain size in order to reach over the balls of the blocked $n$-bracelet.  In fact, this is the case, and the size of these blocking horoballs will be related to the length of $\alpha$ and the size of the smallest ball of the blocked bracelet in the following way.

\begin{lemma} Let $(M,\alpha)$ be a hyperbolic manifold and vertical geodesic pair, where $M$ has one or two cusps and if it has two cusps, $\alpha$ connects them. Let $\alpha$ have length $g < ln(2)$.
Then if there exists an $n$-bracelet in the ball-and-beam pattern containing $H_\infty$ and it  is blocked, there is a blocking ball in any blocking ball pair of radius at least $\frac{2+\sqrt{4-e^{2g}}}{e^g}$ times the Euclidean radius  of the smallest ball in the n-bracelet.
\end{lemma}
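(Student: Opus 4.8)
The plan is to adapt the blocking analysis from the proof of Proposition~\ref{prop:45}, keeping the radius of the blocking ball as a free parameter rather than bounding it by $1/2$. Normalize by an isometry so that $H_\infty$ is one of the balls of the $n$-bracelet, and write the bracelet cyclically as $H_\infty, H_{v_1}, \dots, H_{v_{n-1}}$, so that the beams $H_\infty H_{v_1}$ and $H_\infty H_{v_{n-1}}$ are vertical. Let $\rho = \min_i r_{v_i}$, and let $\{H_e, H_f\}$ be a blocking ball pair, with beam $\beta$ puncturing a disk $D_0$ whose boundary is a nontrivial curve in the bracelet. If $H_e$ or $H_f$ is $H_\infty$ the conclusion is immediate, so assume both are finite and take $r_e \ge r_f$.

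The first step is topological. The disk $D_0$ is pinned along its boundary to $H_\infty$ ``at the top'' and to the chain $H_{v_1} \cup \dots \cup H_{v_{n-1}}$ ``at the bottom,'' while $\beta$ is a geodesic arc of bounded Euclidean height $\sqrt{r_e r_f e^g}$; so the puncture occurs near this chain. An innermost-arc argument should then show that $\beta$ must reach over the bracelet across one of its edges: there is an edge $H_{v_j} H_{v_{j+1}}$ of the bracelet such that $\beta$ crosses the geodesic from $v_j$ to $v_{j+1}$ with one endpoint on each side of it, and $\beta$ is disjoint from $H_{v_j}$ and $H_{v_{j+1}}$ (after shrinking the cusp, $\beta$ may be assumed disjoint from all bracelet balls apart from genuine tangencies). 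As in Proposition~\ref{prop:45}, I expect to be able to arrange that $v_j$, $v_{j+1}$, and the crossing point of $\beta$ lie on a common geodesic, the worst case.

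The second step is the geometric estimate, modeled on the computation in Proposition~\ref{prop:45}. By Lemma~\ref{lengthofg}, the geodesic from $v_j$ to $v_{j+1}$ has Euclidean diameter $2\sqrt{r_{v_j} r_{v_{j+1}} e^g}$, and disjointness of $H_e$ from $H_{v_j}$ (respectively of $H_f$ from $H_{v_{j+1}}$) gives $d(e, v_j) \ge 2\sqrt{r_e r_{v_j}}$ (respectively $d(f, v_{j+1}) \ge 2\sqrt{r_f r_{v_{j+1}}}$). Feeding these into the Pythagorean relation at the midpoint of the edge exactly as in that proof --- with the symmetric configuration $r_e = r_f$, $r_{v_j} = r_{v_{j+1}}$ being extremal --- produces an inequality of the form $e^g\big(r_e^2 + r_{v_j}^2\big) \ge 4\, r_e r_{v_j}$. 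Writing $u = r_e/r_{v_j}$, this is $e^g u^2 - 4u + e^g \ge 0$, whose roots $\tfrac{2 \mp \sqrt{4 - e^{2g}}}{e^g}$ are reciprocal (and real, since $g < \ln 2$); thus $u \le \tfrac{2 - \sqrt{4 - e^{2g}}}{e^g}$ or $u \ge \tfrac{2 + \sqrt{4 - e^{2g}}}{e^g}$. We are on the larger branch: the beam $\beta$ must clear the top of $H_{v_j}$, so $\sqrt{r_e r_f e^g} \ge 2 r_{v_j}$, hence $r_e \ge \sqrt{r_e r_f} \ge 2 e^{-g/2} r_{v_j} > r_{v_j}$, because $g < \ln 2$ forces $2 e^{-g/2} > \sqrt{2} > 1$. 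Therefore $r_e \ge \tfrac{2 + \sqrt{4 - e^{2g}}}{e^g}\, r_{v_j} \ge \tfrac{2 + \sqrt{4 - e^{2g}}}{e^g}\,\rho$, which is the assertion.

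I expect the first step to be the main obstacle: making rigorous the claim that a blocking beam for an $n$-bracelet containing $H_\infty$ must straddle a single edge of the bracelet while staying disjoint from the two balls of that edge, so that the ``gap'' it must pass through is genuinely controlled by bracelet balls of radius at least $\rho$. A secondary technical point is verifying that the symmetric configuration really is the worst case for the non-symmetric version of the Pythagorean estimate, so that the clean inequality $e^g(r_e^2 + r_{v_j}^2) \ge 4 r_e r_{v_j}$ governs the extremal situation.
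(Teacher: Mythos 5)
Your proposal follows essentially the same route as the paper's proof: reduce to the projection of the blocking beam crossing the projection of a single bracelet edge, argue the symmetric configuration (equal blocking radii, equal edge radii, tangency, crossing at the midpoints) is extremal, and combine the disjointness bound $d(e,v_j)\ge 2\sqrt{r_e r_{v_j}}$ with the Pythagorean relation to get the quadratic whose roots are $\frac{2\pm\sqrt{4-e^{2g}}}{e^g}$; your bookkeeping with a general edge and $r_{v_j}\ge\rho$ is in fact a little cleaner than the paper's reduction to ``a beam leaving the smallest ball.'' The one sub-step where you claim more than the geometry forces is the branch selection: the blocking beam does not have to clear the top of $H_{v_j}$ (height $2r_{v_j}$); it only has to pass above the bracelet curve where it crosses the spanning disk, and over the edge that curve runs along the beam, whose maximum height is $\sqrt{r_{v_j}r_{v_{j+1}}e^g}<2r_{v_j}$ for $g<\ln 4$, so the inequality $\sqrt{r_e r_f e^g}\ge 2r_{v_j}$ is not justified. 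This is easily repaired: in your extremal symmetric configuration the height comparison at the crossing point reads $r_e\sqrt{e^g}\ge r_{v_j}\sqrt{e^g}$, so $u=r_e/r_{v_j}\ge 1$, which already excludes the smaller (reciprocal, hence $<1$) root and lands you on the larger branch. The two points you flag as the remaining work --- that the beam must straddle a genuine edge with the disjointness constraints in force, and that the symmetric configuration is worst --- are exactly the points the paper also treats informally (it asserts them in a sentence each and then says ``the quadratic formula then yields the result,'' without ever addressing which root), so your outline is, if anything, more explicit than the published argument at the step it glosses over.
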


\begin{proof} Suppose we have a blocked $n$-bracelet, with the smallest ball in the bracelet having Euclidean radius $r$. The beam connecting any pair of blocking balls must reach a sufficient vertical height to pass over the n-bracelet. Let $H_e$ and $H_f$ be a pair of blocking balls for the bracelet. We can assume that they are the same size since if the first were larger than the second, expanding the second while preserving $g$ only increases the vertical height attained by the beam. Let $d$ be the Euclidean distance between their centers and $R$ their radius. Then the line segment between their centers must intersect one of the line segment projections of the beams from the $n$-bracelet. We can assume that it intersects the projection of one of the beams leaving the smallest ball in the $n$-bracelet, since otherwise, the radius of $H_e$ and $H_f$ would need to be even larger. Let $H_a$ be the smallest ball and $H_b$ the ball connected to it by this beam. We can further assume that $H_b$ has the same radius $r$ as $H_a$. Let $d$ be the Euclidean distance between their centers. By Lemma \ref{lengthofg}, $g = \ln (\frac{d^2}{4r^2}) =  \ln(\frac{D^2}{4R^2})$.

Since balls cannot overlap, note that the centers of $H_e$ and $H_f$ must be a distance of at least $2\sqrt{Rr}$ from the centers of both $H_a$ and $H_b$. To minimize $R$, assume that $H_e$ is closer to $H_a$ than to $H_b$. We can then assume that $H_e$ and $H_f$ are both tangent to $H_a$ and by the Pythagorean Theorem, $4Rr \geq d^2/4 + D^2/4 = r^2 e^g +R^2e^g$. Hence, $R^2 - \frac{4Rr}{e^g} + r^2 \geq 0$. The quadratic formula then yields the result.

\end{proof}

We are now ready to apply the elder sibling property as a criterion for a vertical geodesic to be an unknotting tunnel.

\begin{theorem}
Let $M$ be a hyperbolic 3-manifold of one or two cusps with a vertical geodesic $\alpha$ that connects the cusps if there are two. If the ball-and-beam pattern for $(M, \alpha)$ is elder sibling, and $\alpha$ has length less than $\ln{(2)}$, then $\alpha$ is an unknotting tunnel.
\end{theorem}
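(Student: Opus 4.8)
Suppose toward a contradiction that $\alpha$ is \emph{not} an unknotting tunnel. Then, by Corollary~4.2 of \cite{adams95}, the ball-and-beam pattern of $(M,\alpha)$ contains no $n$-disk; equivalently, every $n$-bracelet the pattern contains is blocked. However, an elder sibling pattern is connected (any two horoballs are joined through $H_\infty$), so by Lemma~\ref{connectedimpliesnbracelet} the pattern does contain an $n$-bracelet, and that bracelet must therefore be blocked. Normalize throughout so that $H_\infty$ is the horizontal plane at Euclidean height $1$ bounding the maximal cusp (or a maximal cusp collection), so that every other horoball has Euclidean radius at most $\tfrac12$; after an isometry of $M$ we may take the bracelet just produced to contain $H_\infty$. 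Finally observe that $g<\ln 2$ forces $e^{2g}<4$, so that $\lambda:=\dfrac{2+\sqrt{4-e^{2g}}}{e^{g}}$ has numerator greater than $2$ and denominator less than $2$; hence $\lambda>1$.

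The plan is an extremal argument. For an $n$-bracelet $\mathcal{B}$ and a horoball $H\in\mathcal{B}$, let $r(\mathcal{B},H)$ denote the Euclidean radius of the smallest finite horoball of $\mathcal{B}$ after an isometry carrying $H$ to $H_\infty$ (equivalently $r(\mathcal{B},H)=\tfrac12 e^{-D}$, where $D$ is the largest hyperbolic distance from $H$ to a horoball of $\mathcal{B}$), and set $r^{*}=\sup r(\mathcal{B},H)$ over all such pairs. Since the pattern contains a bracelet, $r^{*}>0$; and since by Lemma~\ref{sizelemma} only finitely many horoball radii lie above any given positive value in a fixed normalization (equivalently, the pairwise hyperbolic distances between horoballs form a discrete set), the supremum is attained. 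Fix a bracelet $\mathcal{B}_{0}$, normalized so that one of its horoballs is $H_\infty$ and its smallest finite horoball $H_{a}$ has radius exactly $r^{*}$. As $\mathcal{B}_{0}$ is blocked, the lemma immediately preceding this theorem furnishes a pair of blocking balls $\{H_{e},H_{f}\}$, joined by a beam, with $H_{e}$ of Euclidean radius at least $\lambda r^{*}$; in particular the radius of $H_{e}$ is strictly greater than $r^{*}$.

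Next I would manufacture from $\mathcal{B}_{0}$ a new $n$-bracelet through $H_\infty$ whose smallest finite horoball is strictly larger than $r^{*}$, contradicting the choice of $r^{*}$. Concatenate: the elder sibling chain running from $H_\infty$ down to $H_{e}$ (a chain of beams through horoballs of strictly decreasing radius, hence all of radius at least that of $H_{e}$, so all of radius $>r^{*}$); then the blocking beam from $H_{e}$ to $H_{f}$; then the elder sibling chain running from $H_{f}$ back up to $H_\infty$. This closed edge-path through $H_\infty$ contains an $n$-bracelet $\mathcal{B}_{1}$ through $H_\infty$. Every horoball appearing along the two elder sibling chains has radius $>r^{*}$, so \emph{provided the radius of $H_{f}$ also exceeds $r^{*}$}, every finite horoball of $\mathcal{B}_{1}$ has radius $>r^{*}$, giving $r(\mathcal{B}_{1},H_\infty)>r^{*}$ and contradicting maximality. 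Hence the assumption fails and $\alpha$ is an unknotting tunnel.

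The gap, and the step I expect to be the real work, is the case in which the \emph{second} blocking ball $H_{f}$ is small, of radius $\le r^{*}$: then the initial segment of $H_{f}$'s elder sibling chain may still consist of horoballs of radius $\le r^{*}$, and $\mathcal{B}_{1}$ no longer witnesses a violation of maximality. I would attack this in one of two ways. First, by sharpening the input: a blocking beam that arcs over the horoballs of $\mathcal{B}_{0}$, all of which have radius $\ge r^{*}$, cannot descend into a horoball too much smaller than $r^{*}$ without the beam itself dropping below the bracelet it is supposed to block, so the geometry behind the preceding lemma ought to bound \emph{both} radii of a blocking pair below by a definite multiple of $r^{*}$, which would let one enter the elder sibling chain on the $H_{f}$ side already above $r^{*}$. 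Alternatively, one can try to reroute $\mathcal{B}_{1}$ so as to bypass the small horoballs near $H_{f}$, exploiting that $H_{f}$, like every horoball distinct from $H_\infty$, is joined by a beam to a strictly larger horoball, together with the fact that the blocking beam is genuinely linked with $\mathcal{B}_{0}$. Granting such a refinement, Lemma~\ref{sizelemma} guarantees the process terminates, and the hypothesis $g<\ln 2$ is used only to ensure $\lambda>1$ and to license the metric estimates of the preceding lemma (which is precisely the regime in which that lemma is stated). The argument runs identically whether $M$ has one cusp or two cusps with $\alpha$ joining them.
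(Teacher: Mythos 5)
There is a genuine gap, and it is the one you flag yourself: the construction of the new bracelet $\mathcal{B}_1$ requires traversing the blocking beam from $H_e$ to $H_f$ and then climbing $H_f$'s elder sibling chain, and nothing controls the size of $H_f$ or of the balls at the bottom of that chain. The lemma immediately preceding the theorem only guarantees that \emph{one} ball of each blocking pair has radius at least $\frac{2+\sqrt{4-e^{2g}}}{e^g}$ times the smallest bracelet ball; the other ball of the pair can be arbitrarily small, because the blocking beam can cross over the bracelet near its large end while descending steeply to a tiny tangency point at the far end. So your first proposed repair (a lower bound on \emph{both} radii of a blocking pair) is not available, and your second (rerouting to bypass the small balls near $H_f$) is not a detail to be granted but precisely the missing argument. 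As written, the extremal bracelet $\mathcal{B}_0$ does not yield a contradiction, so the proof is incomplete. (A smaller point: the attainment of $r^{*}$ needs more than Lemma \ref{sizelemma} as stated, since you range over all renormalizations sending different horoballs to $\infty$; the discreteness of pairwise horoball distances is an additional claim.)

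The paper's proof gets around exactly this obstruction by never routing the curve through the smaller blocking ball at all. At each blockage it selects only the \emph{larger} blocking ball $H_i$, takes the segment of the bracelet curve lying on $H_\infty$, runs it down the elder sibling chain of $H_i$, loops it \emph{around} the blocking beam, and returns up the same chain; the small ball $H_f$ and its chain never enter the picture. The price is that one loses your one-shot maximality contradiction: the modified curve $\beta_2$ may again be blocked, but any new blockage must reach over the newly added chains, whose smallest balls are strictly larger than those reached over before, so successive modifications involve strictly increasing ball sizes and the process terminates by Lemma \ref{sizelemma}, producing an unblocked nontrivial curve, hence an $n$-disk and therefore an unknotting tunnel by Corollary 4.2 of \cite{adams95}. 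If you wish to salvage your extremal formulation, you would need this ``around the beam, not through it'' modification in place of the passage through $H_f$, and you would then also have to address the fact that the resulting closed curve is no longer a cycle of distinct horoballs, which is why the paper argues with nontrivial curves in the ball-and-beam pattern rather than with honest bracelets at that stage. Your opening reductions (connectivity from elder sibling, existence of a bracelet, $\lambda>1$ when $g<\ln 2$, and the appeal to the blocking-ball lemma) do match the paper's use of those ingredients.
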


\begin{proof}
The elder sibling property implies that  the ball-and-beam pattern for $(M, \alpha)$ is connected, and thus, by Lemma \ref{connectedimpliesnbracelet} it must contain an $n$-bracelet.  We will show that the ball-and-beam pattern also contains an $n$-disk, which is enough to prove that $\alpha$ is an unknotting tunnel.  

Since the ball-and-beam pattern for $(M, \alpha)$ contains an $n$-bracelet, we let $\beta$ be the curve in this $n$-bracelet that passes through the sequence of n balls cyclically connected by beams in order, and connects back to itself, and let $H_{min}$ be the smallest horoball in this $n$-bracelet.  Clearly $\beta$ is a nontrivial curve in the $n$-bracelet, so if it bounds a disk in the ball-and-beam pattern, the ball-and-beam pattern contains an $n$-disk and hence $\alpha$ is an unknotting tunnel.

If, on the other hand, $\beta$ does not bound a disk, it must be the case that the disk is blocked in at least one place by other balls and beams, as shown in Figure \ref{blockeddisk}. 


\begin{figure}[ht]
\centering
\includegraphics*[viewport=20 0 250 150]{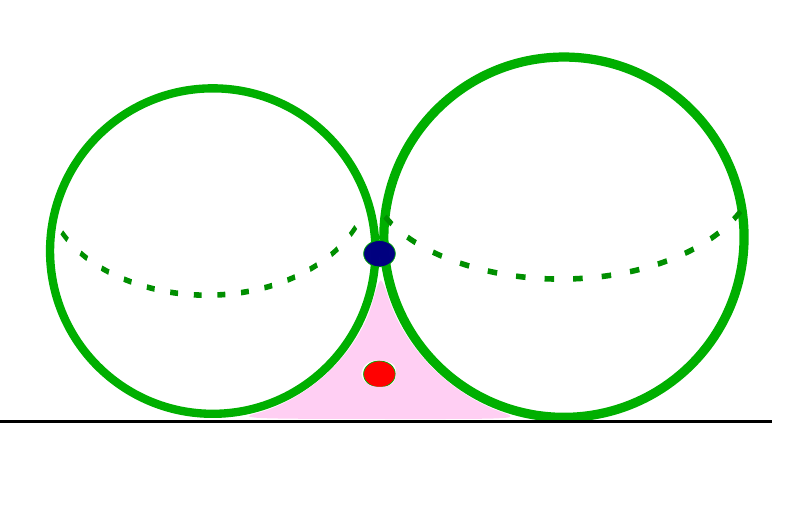}
\caption{A view of one place where the $n$-bracelet bounded by $\beta$ is blocked.  $\beta$ appears in crossection as a grey dot, the blocking beam as a black dot(assuming here the length of $\alpha$ is 0), and the blocking balls in grey.  The blocking disk is shaded.}
\label{oneblock}
\end{figure}

 Note that  $\beta$ has finite length in $H^3$, and thus is contained in a finite volume of $H^3$, so there can only be finitely many such places where the disk is blocked.  We can number them $1, 2,..,m$.  At each place where the disk is blocked, we must have $\beta$ passing through a \textit{blocking disk}, bounded by two horoballs and the beam connecting them, and the x-y plane, as shown in Figure \ref{oneblock}.  We call the beam that connects the two blocking balls and  runs along the boundary of the blocking disk the \textit{blocking beam}.  
 
 Notice that if the disk corresponding to $\beta$ is not blocked by a pair of horoballs connected by a beam, but instead just by a horoball that intersects the disk, we can deform the disk bounded by $\beta$ to pass around the blocking ball to eliminate the block.  At the $i^{th}$ blockage, we choose the larger  of the two horoballs whose connecting beam is blocking the disk and call it $H_i$.  For the bracelet to be blocked, $H_i$ must be at least $\frac{2+\sqrt{4-e^{2g}}}{e^g}$ times the size of the balls that it is reaching over, and hence it must have Euclidean height of at least $\frac{2+\sqrt{4-e^{2g}}}{e^g}$  times the Euclidean height of $H_{min}$.  If $g < \ln{2}$, then the quantity $\frac{2+\sqrt{4-e^{2g}}}{e^g}$ is greater than one, and so this blocking ball is larger than $H_{min}$. Furthermore, we can get from $H_i$ to $H_\infty$ via a chain of horoballs cyclically connected by beams such that the Euclidean height of each ball in this chain is larger than the Euclidean height of $H_i$, and so also larger than the Euclidean height of $H_{min}$.
 
Now, $\beta$ can be modified in the following way.  We will take the segment of $\beta$ that runs along $H_\infty$, and run it down the elder sibling chain of $H_1$, around the beam from $H_1$ that blocked our original $n$-bracelet, back up the elder sibling chain from $H_1$ to $H_\infty$, then down along the elder sibling chain of $H_2$, around the blocking beam incident to $H_2$, and back up to $H_\infty$, continuing this way for all $m$ blockages.  For an example of how this process might look for $m=1$ and a length-0 vertical geodesic, see Figure \ref{unblockeddisk} .  Lastly, we isotope the segments of our new $\beta$ that still run along $H_\infty$ so that these segments are directly above the segments of $\beta$ on the smaller horoballs, making the (possibly punctured) disk bounded by $\beta$ be completely vertical.

\begin{figure}[ht]
\centering
\includegraphics*[viewport=0 0 500 200]{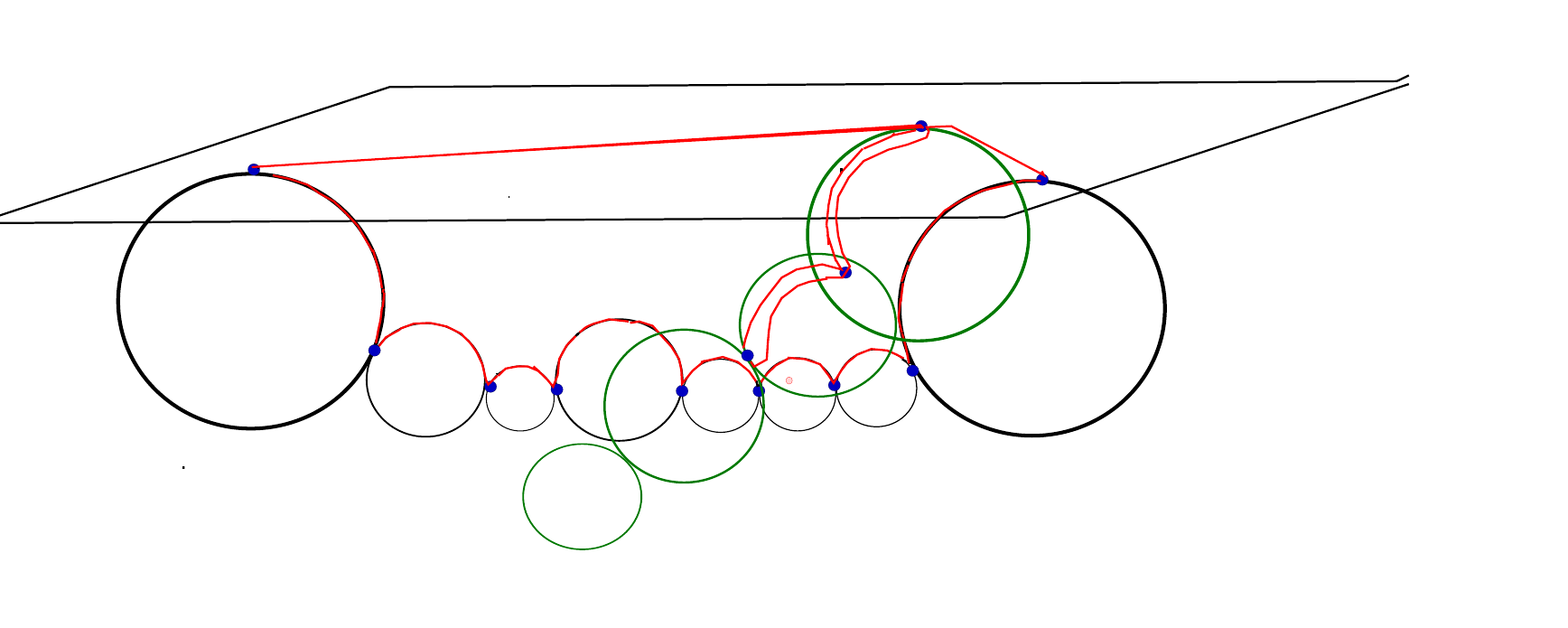}
\caption{We can modify $\beta$ in this way to eliminate a block.}
\label{unblockeddisk}
\end{figure}

We have created  a new $n$-bracelet, that is no longer blocked by our original $m$ blockages.  Let the new curve created by modifying $\beta$ in this way be called $\beta_2$, and note that $\beta_2$ is still a nontrivial curve in an $n$-bracelet.  Thus, if $\beta_2$ bounds a disk, then we have found an $n$-disk in the ball-and-beam pattern for $(M, \alpha)$ and so $\alpha$ must be an unknotting tunnel.  If not, then it must be the case that the disk bounded by $\beta_2$  is blocked, and so that $\beta_2$ passes through some finite number of blocking disks, as in Figure \ref{oneblock}.  These blocking balls incident to these blocking disks reach over the new elder sibling chains that $\beta_2$---but not $\beta$---runs along. These blocking horoballs correspond to new elder sibling chains extending up to $H_\infty$, each of whose smallest ball must be strictly larger than the smallest ball in the part of the bracelet that that they are reaching over.  Again, we repeat the process of modifying $\beta_2$, taking a segment of $\beta_2$ that runs along $H_\infty$, and running it down the elder sibling chain corresponding to each blockage, around $\alpha$, and back up to $H_\infty$, to create a new curve, which we call $\beta_3$.

It can be shown that this process cannot be continued indefinitely.  At each successive  modification of $\beta$, we involve elder-sibling chains whose smallest horoball is strictly larger than the smallest horoball in the elder sibling chains obtained in the previous step.  Thus, we obtain a sequence of horoballs of increasing size, all of which are larger than $H_{min}$.  Since there can be only finitely many horoballs in the cusp diagram larger than a given size, this sequence must be finite.  Thus, our process of modifying $\beta$ must end within a finite number of steps, and so $\beta_t$ will correspond to an unblocked $n$-bracelet for some finite $t$.  We have thus proven the presence of an $n$-disk in the ball-and-beam pattern for $(M,\alpha)$, and it follows that $\alpha$ is an unknotting tunnel.
\end{proof}

\bibliographystyle{plain}
\bibliography{SMALLrefs}

\end{document}